
\documentclass[11pt,a4paper]{article}

\usepackage[english]{babel}
\usepackage[T1]{fontenc}
\usepackage[utf8]{inputenc}

\usepackage{bm}
\usepackage{lmodern}
\usepackage[normalem]{ulem}

 \usepackage[top=2.25cm, bottom=2.75cm, left=2.75cm, right=2.75cm]{geometry}
\usepackage{latexsym}
\usepackage{pdflscape}
\usepackage{fancyhdr}
\usepackage{setspace}

\usepackage{graphicx}
\usepackage[dvipsnames]{xcolor}
\usepackage{epstopdf}
\usepackage{epsf}
\usepackage{eepic}
\usepackage{tikz}
\usetikzlibrary{shadings,shapes,arrows,calc,positioning,shadings,decorations.markings, patterns}

\usepackage[numbers]{natbib}

\usepackage{amsmath,amssymb,amsfonts,amscd}
\usepackage{mathabx}

\usepackage{enumitem}
\usepackage{caption}
\usepackage[format=hang,margin=10pt]{subcaption}
\usepackage{listings}
\usepackage{varwidth}

\usepackage{algorithm}
\usepackage{algpseudocode}
\usepackage[numbered,framed]{matlab-prettifier}
\usepackage{amsthm}
\usepackage[linguistics]{forest}
\forestset{
nice empty nodes/.style={
    for tree={l sep*=2},
    delay={where content={}{shape=coordinate}{}}
},
}

\usepackage{changepage}
\usepackage{xifthen}
\usepackage{todonotes}

\usepackage{hyperref}
\usepackage{cleveref}


\numberwithin{equation}{section}
\setlist[itemize,1]{label=$\bullet$}
\setlist[itemize,2]{label=$\triangleleft$}
\setlist[enumerate,1]{label=(\roman*)}
\setlist[enumerate,2]{label=(\arabic*)}



\hypersetup{
    breaklinks=true,
    unicode=true,
    pdfpagelayout=OneColumn,
    bookmarksnumbered=true,
    bookmarksopen=true,
    bookmarksopenlevel=0,
    pdfborder={0 0 0},  
    colorlinks=true,
    linkcolor=Cerulean,    
    citecolor=Cerulean,
}


\algnewcommand\algorithmicinput{\textbf{Input:}}
\algnewcommand\AlgInput{\item[\algorithmicinput]}
\algnewcommand\algorithmicoutput{\textbf{Output:}}
\algnewcommand\AlgOutput{\item[\algorithmicoutput]}


\lstMakeShortInline"
\lstset{
  style              = Matlab-editor,
  basicstyle         = \mlttfamily,
  escapechar         = ",
  mlshowsectionrules = true,
  xleftmargin		 = 4em,
}

\newtheoremstyle{dotless}{}{}{\itshape}{}{\bfseries}{}{ }{}
\newtheoremstyle{no-italic}{}{}{}{}{\bfseries}{}{ }{}
\theoremstyle{dotless}
\newtheorem{Theorem}{Theorem}[section]
\newtheorem{Example}[Theorem]{Example}
\newtheorem{Lemma}[Theorem]{Lemma}
\newtheorem{Definition}[Theorem]{Definition}
\newtheorem{Assumption}{Assumption}

\newtheorem{Remark}[Theorem]{Remark}
\newtheorem{Proposition}[Theorem]{Proposition}

\newtheorem{Test Instance}[Theorem]{Test Instance}
\theoremstyle{no-italic}



\newcommand*{\R}{\mathbb{R}}

\newcommand*{\N}{\mathbb{N}}


\DeclareMathOperator{\cl}{cl}			
\DeclareMathOperator{\wargmin}{wargmin}	







\setlength {\marginparwidth }{2cm}
\usepackage{tikz}

\def\R{{\mathbb R}}

\def\Min{\textup{Min}}

\def\Int{\textup{int }}

\def\cl{\textup{cl}}

\def\gph{\textup{gph }}



 %
 %
 %

\title{Set-based Robust Optimization of\\ Uncertain Multiobjective Problems via\\ Epigraphical Reformulations}
\author{Gabriele Eichfelder\thanks{(Corresponding author) Institute of Mathematics, Technische Universität Ilmenau, Po 10 05 65, D-98684 Ilmenau, Germany,
{\texttt{gabriele.eichfelder@tu-ilmenau.de}}}\ \   and   Ernest Quintana\thanks{Institute of Mathematics, Technische Universität Ilmenau, Po 10 05 65, D-98684 Ilmenau, Germany, {\texttt{ernestqtptf@icloud.com}}}}
\date{\today}

\begin{document}

\maketitle

\begin{abstract}
In this paper, we study a method for finding 
robust solutions to multiobjective optimization problems under uncertainty. We follow the set-based minmax approach for handling the uncertainties which leads to a certain set optimization problem with the strict upper type set relation. We introduce, under some assumptions, a reformulation using instead the strict lower type set relation without sacrificing the compactness property of the image sets. This allows to apply vectorization results to characterize the optimal solutions of these set optimization problems as optimal solutions of a multiobjective optimization problem.  We end up with multiobjective semi-infinite problems which can then be studied with classical techniques from the literature.
\end{abstract}

\noindent {\small\textbf{Key Words:}}
Multiple objective programming, Robustness and sensitivity analysis, set optimization, Semi-infinite programming

\vspace{2ex} \noindent {\small\textbf{Mathematics subject
classifications (MSC 2010):}}
90C29, 
90C31, 
90C47, 
90C34. 

\section{Introduction}
\label{section:intro}
In real-world applications of optimization, it is usually the case that the problem to solve involves the minimization of different competing objectives simultaneously. Even more, the data defining the problem is most of the time obtained through methods that have known sources of errors, like the results of experiments or the measurements of some technical equipment. As a consequence, the problem is, in these cases, not completely available to the decision maker at optimization time. Traditionally, the fields of multiobjective and robust optimization have dealt with these two difficulties independently, namely the minimization of multiple objectives  and the uncertainty in the data, respectively. However, because of practical relevance, the combination of techniques from both areas have received a lot of attention during the last decade.

In robust optimization, the uncertainty of the data is modeled by identifying the uncertain problem as a parametric family of (deterministic) optimization problems, where every such problem is associated to one possible value of the uncertain vector of parameters in the model. This space of possible parameters is usually called the uncertainty set and must be specified before any optimization process can take place. Most of the research in this area then focuses on the definition of suitable optimality concepts and algorithms for finding solutions corresponding to those concepts. Typically, the main ingredient in the definition of an optimality concept is the construction of a deterministic optimization problem that represents, according to some specific criteria, the parametric family already modeled. These problems are referred in the literature as robust counterpart problems, and their solutions are called the robust solutions of the uncertain problem (in the sense of the corresponding robust counterpart). Thus, the robust counterpart problem plays a fundamental role as it completely specifies a concept of robust solution. 

For single-objective optimization problems under uncertainty, perhaps the most popular concept is the one coming from the worst case or minmax approach \cite{bentalghaouinemirovski2009}. There, roughly speaking, the robust counterpart problem attempts to minimize the supremum of the objective function over the uncertainty set while satisfying all possible constraints associated to all possible values of the vector of uncertain parameters. Throughout the years, two main extensions of this concept from the scalar case to the multiobjective setting have been proposed in the literature, namely the so called point-based minmax approach \cite{KuroiwaLee2012} and the set-based minmax approach \cite{EhrgottIdeSchobel2014}. In the point-based minmax approach, the robust counterpart problem minimizes the supremums over the uncertainty set of every objective function individually. However, by doing this, the fact that every objective function depends on a common number of uncertain parameters is missed and, as a consequence, the obtained robust solutions may be too conservative to be employed in practice.  For a discussion on the price of robustness see \cite{Schoebel21}. This modelling drawback is somewhat alleviated with the set-based minmax approach. There, the robust counterpart problem is a set-valued optimization problem \cite{KTZ} in which a set-valued mapping associating to every decision a set of possible vector costs is minimized, therefore giving greater flexibility to the decision maker. However, because of the set-valued nature of the robust counterpart in this case, finding robust solutions w.r.t.\ this approach is computationally more challenging. This is precisely the topic of this paper.

Except for the  branch and bound scheme described in \cite{eichfeldernieblingrocktaschel2019} for the specific setting of decision uncertainty, algorithms for uncertain multiobjective optimization are based on some type of scalarization, see for instance  \cite{EhrgottIdeSchobel2014,idekobis2014, IdeKobisKuroiwa2014,jiangcaoxiong2019, schmidtschobelthom2019}.  
The main drawback of the scalarization-based methods   is that, in general, they are not able to recover all the solutions of the set optimization problem. In fact, the $\epsilon$-constraint method, which is known to overcome this difficulty in standard multiobjective optimization, will fail in this setting. Thus, algorithms that are able to deal with this problem are of interest. 

Our main contribution to this aim is a generalization of the epigraphical reformulation known from  single-objective uncertain optimization to the multiobjective setting. This approach consists of replacing the robust counterpart by a family of semi-infinite multiobjective optimization problems. In particular, we show that by doing this the structural difficulties of the set-valued robust counterpart are removed and, as a consequence, in a second step the decision maker can tackle these problems by combining classical techniques from multiobjective optimization and semi-infinite programming. 

 The paper is organized as follows. In Section 2 we collect the preliminaries including the required concepts from multiobjective and from set optimization, and we recall the vectorization results from \cite{EichfelderQuintanaRocktaeschel2022} which are a main ingredient for our approach. Moreover, we formulate the uncertain multiobjective optimization problem. In Section 3 we find an equivalence between the different types of set relations which allows to make use of the mentioned vectorization results. As a consequence, we can formulate in Section 4 the epigraphical reformulation and we can provide  results on the relation between the optimal solution sets of the reformulation and the uncertain multiobjective problem. Under some additional assumptions we even guarantee exactness of the approximation. We then conclude the paper in Section 5 with some final remarks.

\section{Preliminaries}
\label{section:prelims}

We begin this section by introducing the notation that will be used throughout the paper. We write $[p]$ for $\{1,\ldots,p\}$ for any $p\in\N$. For a set $A\subseteq\R^m$,  $|A|$,  $\cl\; A,\ \Int A\;,\ A^c  $ denote   the cardinality, the closure, the  interior and the complement  of $A$, respectively. We denote by $e$ the all-one vector of appropriate dimensions. 
$\|\cdot\|$ denotes, unless otherwise stated, the Euclidean norm. Inequalities between vectors, i.e., $\leq$ and $<$, are understood in a component-wise sense, and if  $f: \R^n \rightarrow \R^m$, we denote by $f_j$ the $j^{th}$-component of $f$.

\subsection{Multiobjective Optimization}
We now start by recalling the notion of a multiobjective optimization problem, together with some of its optimal solution concepts. Let a nonempty set $\Omega \subseteq \R^n$ and a vector-valued function $f\colon  \Omega \to  \R^m$ be given. Then, the multiobjective optimization problem associated to this data is defined as  
\begin{equation}\label{eq:vpg}\tag{$\mathcal{MP}$}
\begin{array}{ll}
\min\limits_{x} \; f(x) \\
  \; \textup{s.t.} \;\; x \in \Omega.\\
\end{array}
\end{equation} 
We say that $\bar{x}\in\Omega$ is a weakly efficient solution of $\eqref{eq:vpg}$ if  there exists no $x\in \Omega$ with $ f(x) < f(\bar{x})$  and we denote the set of weakly efficient solutions by $\wargmin \eqref{eq:vpg}$.
We say that $\bar{x}\in\Omega$ is an efficient solution of $\eqref{eq:vpg}$ if there exists no $x\in \Omega$ with $ f(x) \leq f(\bar{x})$ and $  f(x) \neq f(\bar{x})$. Moreovoer, we say that $\bar{x}\in\Omega$ is a strongly efficient solution of $\eqref{eq:vpg}$ if it holds for all $x\in\Omega$ that $f(\bar{x}) \leq  f(x)$.

Closely related, for a set $A \subseteq \R^m,$ we will denote by $\Min(A, \R^m_+)$  the set of all efficient solutions of the multiobjective optimization problem
\begin{equation*}
\begin{array}{ll}
\min\limits_{y} \; y \\
  \; \textup{s.t.} \;\; y \in A.\\
\end{array}
\end{equation*} 


Moreover, from the practical point of view it is important to consider different notions of approximate solutions to multiobjective problems. For weakly efficient solutions, the following concept was introduced by Kutateladze \cite{Kutateladze1979} and will be often used in our results:

\begin{Definition}
Let $\varepsilon \geq 0$ be given. We say that $\bar{x} \in \Omega$ is an $\varepsilon$-weakly efficient solution of $\eqref{eq:vpg}$ if 
       $$ 
        \nexists \; x\in \Omega: f(x) < f(\bar{x}) - \varepsilon e.
       $$ 
        The set of $\varepsilon$-weakly efficient solutions is denoted by $\varepsilon\textrm{-}\wargmin \eqref{eq:vpg}.$   
\end{Definition}

\subsection{Set Optimization}

Next we recall a class of problems that generalize multiobjective problems, and that is referred in the literature as set optimization problems. There, we deal with the minimization of set-valued mappings. This class of problems is of particular interest to us because of two main reasons. On the one hand, they provide a unified way to define the solutions of the uncertain multiobjective optimization problem \eqref{ump}, see page \pageref{ump},  that we will treat in this paper. On the other hand, previous results derived for set optimization problems will be a main tool in our work. 

A distinctive feature of set optimization is the use of binary relations in order to compare the images of the set-valued objective mapping. Therefore, we first need to define those that will be used in our context. For these binary relations and the further basic concepts of set optimization we  refer to the recent book \cite{KTZ}.

\begin{Definition}\label{def-set-relation}
The following binary relations are defined on the class of nonempty subsets of $\R^m,$ i.e. for arbitrary nonempty sets $A,B \subseteq \R^m$:
\begin{enumerate}
    \item 
    $A \prec^l B: \Longleftrightarrow B\subseteq A+ \Int\R_+^m,$
    \item 
    $A \preceq^l B: \Longleftrightarrow B\subseteq A+\R_+^m,$
    \item   
    $A \prec^u B: \Longleftrightarrow A\subseteq B - \Int\R_+^m,$
    \item 
    $A \preceq^u B: \Longleftrightarrow A\subseteq B -\R_+^m.$
    
\end{enumerate}
We write $A\nprec^l B$ and $A\nprec^u B$ if the inequalities $A\prec^l B$ and $A\prec^u B$ are not satisfied, respectively. In a similar way, we write $A\npreceq^l B$ and $A\npreceq^u B.$
\end{Definition}

The binary relations $\preceq^l, \prec^l, \preceq^u, \prec^u$ in Definition \ref{def-set-relation} are referred in the literature as the lower type, strict lower type, upper type, and strict upper type set relations, respectively. It is well known that $\preceq^l$ and $\preceq^u$ are preorder relations, but that they are not necessarily antisymmetric. Likewise, $\prec^l$ and $\prec^u$ are transitive, but in general fail to be reflexive or antisymmetric.

Let now $\Omega \subseteq \R^n$ be a nonempty closed set and $F\colon  \Omega \rightrightarrows \R^m$ be a given set-valued mapping with $F(x)$ nonempty for all $x\in\Omega$. Then, the set optimization problem corresponding to this data is 
\begin{equation}\label{sp}\tag{$\mathcal{SP}$}
\begin{array}{rl}
&\min\limits_{x} \; F(x) \\
                        &\;\textup{s.t.} \;\; x \in \Omega.\\
\end{array}  
\end{equation} 

\begin{Definition}\label{def:sp solution concept}
Let $\bar{x} \in \Omega$ and let $\star \in \{l,u\}.$ We say that $\bar{x}$ is a $\star$-weakly efficient solution of $\eqref{sp}$ if there exists no $x\in\Omega$ with $F(x) \prec^\star F(\bar{x})$. The set of $\star$-weakly efficient solutions of \eqref{sp} is denoted by $\wargmin^\star \eqref{sp}.$ 
\end{Definition}

In \cite{EichfelderQuintanaRocktaeschel2022}, the authors introduced a vectorization scheme for finding $l$-weakly efficient solutions of \eqref{sp}. There, for $p \in \N,$ the multiobjective optimization problem
\begin{equation}\label{vpp}
\begin{array}{rl}
 & \min\limits_{x,y^1, \ldots,y^p} \; \begin{pmatrix}
y^1 \\ \vdots \\y^p
\end{pmatrix}   \tag{$\mathcal{VP}_p$} \\
  \\
                        & \quad \textup{s.t.} \;\; y^i  \in F(x) \textrm{ for each }  i \in [p], \\
                        & \qquad \quad x \in \Omega
\end{array} 
\end{equation} is considered. Some of the main results in that reference that will be employed by us are summarized in the theorem below. There,  for $\varepsilon \geq  0,$ the projection of the set $\varepsilon\textrm{-}\wargmin \eqref{vpp}$ onto $\R^n$ is  denoted by $\varepsilon\textrm{-}\wargmin_x\eqref{vpp}$,  i.e.,
$$\varepsilon\textrm{-}\wargmin_x\eqref{vpp}:= \{x' \in \R^n \mid \exists\; y^1,\ldots y^p \in F(x') : (x',y^1,\ldots, y^p) \in \varepsilon\textrm{-}\wargmin \eqref{vpp}\}.$$
We recall here Prop.\ 3.1(i), Theorem 3.5(i), Cor.\ 3.8(i), Theorem 4.4 and 4.5 from \cite{EichfelderQuintanaRocktaeschel2022}:
\begin{Theorem}
\label{thm:splmainresults}
Suppose that the set-valued mapping $F$ is compact-valued. Then, the following statements hold:
\begin{enumerate}
\item \label{item:monotwargminvpp}For every $\varepsilon \geq 0,$ the sequence $\left\{\varepsilon \textrm{-} \wargmin_x \eqref{vpp}\right\}_{p\geq 1}$ is monotonically increasing in $p$ with respect to inclusion, i.e., 
$$p_1 \leq p_2 \Longrightarrow \varepsilon \textrm{-} \wargmin_x \;(\mathcal{VP}_{p_1}) \subseteq \varepsilon \textrm{-} \wargmin_x \;(\mathcal{VP}_{p_2}).$$ 
\item \label{item:approxpropertyvpp}The following approximation property is true:
\begin{displaymath}
\bigcup\limits_{p \in \N} \wargmin_x \eqref{vpp} \subseteq \wargmin^l \eqref{sp} = \bigcap\limits_{\varepsilon >0}\bigcup\limits_{p\in \N} \varepsilon \textrm{-}\wargmin_x  \eqref{vpp}.
\end{displaymath} Furthermore, if $F \left(\wargmin^l \eqref{sp}\right)$ is bounded, then for each $\varepsilon>0$ we can find $p \in \N$ such that
\begin{displaymath}
\wargmin_x \eqref{vpp} \subseteq \wargmin^l \eqref{sp} \subseteq \varepsilon \textrm{-} \wargmin_x \eqref{vpp}.
\end{displaymath}
\item \label{item:fdvpvpp}Suppose that $|\Omega|< + \infty$ or that $\max\limits_{x\in \Omega} |\Min(F(x),\R^m_+)|< + \infty.$ Then, there exists $p \in \N$ such that
\begin{equation}\label{eq:wfdvpspl}
    \wargmin_x \eqref{vpp} = \wargmin^l \eqref{sp}.
\end{equation} Furthermore, in the first case \eqref{eq:wfdvpspl} holds with $p = |\Omega| - 1;$ in the second case with $p = \max\limits_{x\in \Omega} |\Min(F(x),\R^m_+)|.$
\end{enumerate}
\end{Theorem}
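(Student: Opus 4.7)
The plan is to treat the three items in order, since each reduces to a direct compactness/combinatorial argument once the right witnesses in $F(\bar{x})$ are identified.

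For item \ref{item:monotwargminvpp} I would prove monotonicity by a padding argument. Given $\bar{x}\in\varepsilon\textrm{-}\wargmin_x(\mathcal{VP}_{p_1})$ with witnesses $y^1,\dots,y^{p_1}\in F(\bar{x})$, define $y^{p_1+1}=\cdots=y^{p_2}=y^{p_1}$ and check that $(\bar{x},y^1,\dots,y^{p_2})$ is $\varepsilon$-weakly efficient in $(\mathcal{VP}_{p_2})$: a strict dominator $(x,z^1,\dots,z^{p_2})$ in $(\mathcal{VP}_{p_2})$ would immediately restrict to one in $(\mathcal{VP}_{p_1})$. No compactness needed here.

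For item \ref{item:approxpropertyvpp} the first inclusion $\bigcup_p \wargmin_x(\mathcal{VP}_p)\subseteq \wargmin^l(\mathcal{SP})$ is the clean direction: if some $x\in\Omega$ satisfied $F(x)\prec^l F(\bar{x})$, each witness $\bar{y}^i\in F(\bar{x})$ would decompose as $\bar{y}^i=z^i+w^i$ with $z^i\in F(x)$, $w^i\in\Int\R^m_+$, yielding a strict dominator of the vector problem. For the equality, the key is to characterize $\bar{x}\in\wargmin^l(\mathcal{SP})$ as: for every $x\in\Omega$ there exists $y\in F(\bar{x})$ with $y\notin F(x)+\Int\R^m_+$. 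Fix $\varepsilon>0$; exploit compactness of $F(\bar{x})$ to pick a finite $(\varepsilon/2)$-net $\{y^1,\dots,y^p\}\subseteq F(\bar{x})$ in the sup-norm, and verify that any strict $\varepsilon$-dominator of $(\bar{x},y^1,\dots,y^p)$ would translate through the net to $F(x)\prec^l F(\bar{x})$. Conversely, if $F(x)\prec^l F(\bar{x})$ then compactness of $F(\bar{x})$ and closedness of $F(x)+\R^m_+$ let us extract a uniform gap $\delta>0$ so that $F(\bar{x})\subseteq F(x)+\delta e+\R^m_+$; then for all $\varepsilon<\delta$ no choice of witnesses makes $\bar{x}$ lie in $\varepsilon\textrm{-}\wargmin_x(\mathcal{VP}_p)$. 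The "furthermore" statement is handled by the same $\varepsilon$-net construction, but now the uniform boundedness of $F(\wargmin^l(\mathcal{SP}))$ gives one fixed $p\in\N$ working for all weakly efficient $\bar{x}$ simultaneously. The compactness-to-uniform-gap step is the main obstacle, as one must make sure the $\varepsilon$-net cardinality can be bounded independently of $\bar{x}$.

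For item \ref{item:fdvpvpp} the idea is that the witnesses needed for weak efficiency are inherently finite in these two cases. If $|\Omega|<\infty$, then for each $x\in\Omega\setminus\{\bar{x}\}$, since $F(x)\not\prec^l F(\bar{x})$, I pick $y^x\in F(\bar{x})$ with $y^x\notin F(x)+\Int\R^m_+$; the family $\{y^x:x\in\Omega\setminus\{\bar{x}\}\}$ has cardinality at most $|\Omega|-1$ and by construction blocks every potential strict dominator, giving membership in $\wargmin_x(\mathcal{VP}_{|\Omega|-1})$. If instead $\max_{x\in\Omega}|\Min(F(x),\R^m_+)|=:p<\infty$, use the finitely many minimal points of $F(\bar{x})$ (padded if fewer than $p$) as witnesses; compactness of $F(\bar{x})$ together with external stability ensures that every $z\in F(x)$ dominating some witness would be dominated by a minimal element, reducing the problem to the $\preceq^l$ comparison at minimal points and again contradicting weak efficiency. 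The reverse inclusion $\wargmin_x(\mathcal{VP}_p)\subseteq\wargmin^l(\mathcal{SP})$ is immediate from item \ref{item:approxpropertyvpp}.
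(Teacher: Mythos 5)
First, note that the paper itself does not prove this theorem: it is quoted verbatim from \cite{EichfelderQuintanaRocktaeschel2022}, so there is no in-paper proof to compare against, and your sketch has to stand on its own. Most of it does. Item \ref{item:monotwargminvpp} by padding is correct. In item \ref{item:approxpropertyvpp}, the first inclusion, the finite $(\varepsilon/2)$-net of the compact set $F(\bar{x})$ for the inclusion $\wargmin^l(\mathcal{SP})\subseteq\bigcup_p\varepsilon\textrm{-}\wargmin_x(\mathcal{VP}_p)$, and the uniform-gap argument for the converse (continuity of $y\mapsto\min_{a\in F(x)}\max_{j}(a_j-y_j)$ on the compact set $F(\bar{x})$ gives $\delta>0$ with $F(\bar{x})\subseteq F(x)+\delta e+\R^m_+$) all work. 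The step you flag as the main obstacle in the ``furthermore'' part is in fact routine: boundedness of $F\left(\wargmin^l(\mathcal{SP})\right)$ puts all these sets in one fixed box, whose covering number at mesh $\varepsilon/2$ bounds the net cardinality independently of $\bar{x}$, so a single $p$ suffices. The second case of item \ref{item:fdvpvpp} (minimal points as witnesses, external stability) is also sound.

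The genuine gap is in the first case of item \ref{item:fdvpvpp}. Membership of $\bar{x}$ in $\wargmin_x(\mathcal{VP}_{|\Omega|-1})$ requires the whole point $(\bar{x},y^1,\ldots,y^p)$ to be weakly efficient, so you must also exclude dominators of the form $(\bar{x},z^1,\ldots,z^p)$ with $z^i\in F(\bar{x})$ and $z^i<y^i$; your witnesses $y^x\notin F(x)+\Int\R^m_+$ only block dominators with first component $x'\neq\bar{x}$, and nothing prevents the chosen $y^x$ from being strictly dominated inside $F(\bar{x})$ itself. Concretely, take $\Omega=\{\bar{x},x_1\}$, $F(\bar{x})=\{(2,0),(1,-1)\}$, $F(x_1)=\{(0,5)\}$: the choice $y^{x_1}=(2,0)$ satisfies your requirement, yet $(\bar{x},(2,0))$ is strictly dominated by the feasible point $(\bar{x},(1,-1))$, so it is not weakly efficient for $(\mathcal{VP}_1)$, even though $\bar{x}$ does belong to $\wargmin_x(\mathcal{VP}_1)$ via the other witness. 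The repair is exactly the device you already use in the second case: by external stability of the compact set $F(\bar{x})$, replace each $y^x$ by a minimal point $\tilde{y}^x\in\Min(F(\bar{x}),\R^m_+)$ with $\tilde{y}^x\leq y^x$. The blocking property survives, since $\tilde{y}^x\in F(x)+\Int\R^m_+$ would force $y^x\in F(x)+\Int\R^m_+ +\R^m_+=F(x)+\Int\R^m_+$; and minimality rules out dominators with $x'=\bar{x}$, so the count $p=|\Omega|-1$ goes through.
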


The existence of $p \in \N$ such that \eqref{eq:wfdvpspl} holds is known as the weakly minimal finite dimensional vectorization property, see \cite{EichfelderQuintanaRocktaeschel2022}. A version of this property will be introduced in our context later on. The following theorem from  \cite[Theorem 3.9]{EichfelderQuintanaRocktaeschel2022} is another result on the approximation quality of $ \wargmin_x  \eqref{vpp}$ which we need in the following. 

\begin{Theorem}\label{thm:sufficient vectorization}
Suppose that $\Omega$ is compact, the mapping $F$ is compact-valued, the set $F\left(\Omega\right)$ is bounded, and  $\gph F$ is closed. Then,
\begin{displaymath}
\forall\; x\in \Omega, \exists\; \bar{x} \in \cl \left(\bigcup\limits_{p \in \N} \wargmin_x  \eqref{vpp}\right) : F(\bar{x})\preceq^l F(x).
\end{displaymath}
\end{Theorem}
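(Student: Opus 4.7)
The plan is to exhibit $\bar{x}$ as a cluster point of a sequence $\bar{x}_p \in \wargmin_x \eqref{vpp}$ whose images $F(\bar{x}_p)$ componentwise dominate a prescribed finite subset of $F(x)$ of growing size. Closedness of $\gph F$ will allow passing to the limit to obtain $F(\bar{x}) \preceq^l F(x)$, while the monotonicity in Theorem~\ref{thm:splmainresults}(\ref{item:monotwargminvpp}) places $\bar{x}$ in $\cl \bigl(\bigcup_{p\in\N}\wargmin_x \eqref{vpp}\bigr)$.

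Since $F(x)$ is compact and hence separable, I fix a countable dense set $\{y^k\}_{k\in\N}\subseteq F(x)$. For every $p\in\N$ I consider
$$
A_p := \bigl\{(x',\tilde y^{1},\ldots,\tilde y^{p}) \in \Omega \times (\R^m)^p \;\bigm|\; \tilde y^{i} \in F(x') \text{ and } \tilde y^{i} \leq y^i \text{ for all } i \in [p]\bigr\}.
$$
This set is nonempty, containing $(x,y^{1},\ldots,y^{p})$, and compact in view of the compactness of $\Omega$, boundedness of $F(\Omega)$, and closedness of $\gph F$. Minimizing the continuous linear functional $(x',\tilde y^{1},\ldots,\tilde y^{p})\mapsto \sum_{i=1}^{p} e^\top \tilde y^{i}$ over $A_p$ produces a minimizer $(\bar x_p, \bar y^{1}_p, \ldots, \bar y^{p}_p)$. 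The next step is to argue that this minimizer is weakly efficient for $\eqref{vpp}$: any feasible point of $\eqref{vpp}$ with strictly smaller value in every component would, thanks to $\bar y^{i}_p\leq y^i$, still belong to $A_p$ and have a strictly smaller objective sum, contradicting minimality.

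To finish, compactness of $\Omega$ and boundedness of $F(\Omega)$ allow a standard diagonal extraction of a subsequence, still denoted $p_l$, along which $\bar x_{p_l}\to\bar x \in \Omega$ and, for every fixed $k$, $\bar y^{k}_{p_l}\to z_k$; closedness of $\gph F$ gives $z_k\in F(\bar x)$, and $\bar y^{k}_{p_l}\leq y^k$ for $p_l\geq k$ yields $z_k\leq y^k$. For an arbitrary $z\in F(x)$, density of $\{y^k\}$ provides a subsequence $y^{k_j}\to z$, and since $F(\bar x)$ is compact, $\{z_{k_j}\}$ admits a cluster point $z'\in F(\bar x)$ with $z'\leq z$. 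Hence $z\in F(\bar x)+\R^m_+$, establishing $F(\bar x)\preceq^l F(x)$. Theorem~\ref{thm:splmainresults}(\ref{item:monotwargminvpp}) places each $\bar x_{p_l}$ in $\bigcup_{p\in\N}\wargmin_x \eqref{vpp}$, so $\bar x$ lies in its closure. The main subtlety I anticipate is the interleaving of indices in the double limit: the dense-sequence index $k$ and the vectorization index $p_l$ must be controlled jointly so that $\gph F$-closedness can be invoked first to secure each $z_k$ and then again to secure the final $z'$.
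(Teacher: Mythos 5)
Your proof is correct. Note that the paper itself does not prove Theorem \ref{thm:sufficient vectorization} but imports it from \cite{EichfelderQuintanaRocktaeschel2022}; your argument is a valid self-contained derivation in the same spirit as the cited one: you dominate a countable dense subset $\{y^k\}$ of $F(x)$ by solutions of the auxiliary problems (your scalarization $\sum_i e^\top \tilde y^i$ over $A_p$ cleanly yields points that are weakly efficient for \eqref{vpp}, since any feasible point of \eqref{vpp} strictly dominating the minimizer in every component stays in $A_p$ and strictly decreases the sum), and then pass to the limit using compactness of $\Omega$, boundedness of $F(\Omega)$, closedness of $\gph F$ and compactness of $F(\bar x)$, exactly as needed; the citation of Theorem \ref{thm:splmainresults}\ref{item:monotwargminvpp} at the end is superfluous, since each $\bar x_{p_l}$ already lies in $\wargmin_x(\mathcal{VP}_{p_l})$ and hence in the union.
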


\subsection{The Uncertain Multiobjective Problem}

In the final part of this section, we introduce the main problem of the paper. This will be done under the following setup:

\begin{Assumption}\label{basic_setup}
Let $\Omega \subseteq \R^n$ and $\mathcal{U}\subseteq \R^k$ be nonempty and closed sets, with $\mathcal{U}$ compact. Furthermore, let $f:\Omega \times \mathcal{U} \rightarrow \R^m$ be a given continuous vector-valued function.
\end{Assumption}

Under Assumption \ref{basic_setup}, we study the uncertain multiobjective optimization problem 
\begin{equation*}\label{ump}
\left\{\begin{array}{ll}
\min\limits_{x} \; f(x,u)  \tag{$\mathcal{UP}$} \\
  \; \textup{s.t.} \;\; x \in \Omega\\
\end{array}  \Big|\; u \in \mathcal{U} \right\}.
\end{equation*} Thus, \eqref{ump} is formally represented as a parametric family of multiobjective optimization problems, where there is one such problem for each possible scenario $u \in \mathcal{U}.$ The parameter $u$ in this case represents those in a model which are not available at optimization time, but for which we have as information that they belong to the uncertainty set $\mathcal{U}.$ In a more general setting, there are also uncertainties in the constraints. 

As mentioned in the introduction, the main ingredient needed in order to define solution concepts for \eqref{ump} by using the robust approach is the so called robust counterpart problem. In this paper we follow the min-max set-based approach introduced in \cite{EhrgottIdeSchobel2014} and consider the associated set-valued mapping $F_{\mathcal{U}}: \Omega \rightrightarrows \R^m$ defined as 
\begin{equation}\label{eq:F_U}
F_{\mathcal{U}}(x):= \left\{f(x,u) \mid u \in \mathcal{U}\right\}.
\end{equation} 
For every $x \in \Omega,$ the set  $F_{\mathcal{U}}(x)$ represents the set of all possible vector costs associated to the decision $x.$  The robust counterpart of \eqref{ump} is then the set optimization problem
\begin{equation}\label{rc}\tag{$\mathcal{RC}$}
\begin{array}{rl}
&\min\limits_{x} \; F_\mathcal{U}(x) \\
                        &\;\textup{s.t.} \;\; x \in \Omega.\\
\end{array}  
\end{equation} 

\begin{Definition}(\cite[Definition 3.1]{EhrgottIdeSchobel2014})\label{def:robustweaklyeff}
Let Assumption \ref{basic_setup} hold  and let $\bar{x} \in \Omega.$  We say that $\bar{x}$ is a robust weakly efficient solution of \eqref{ump} if it is a $u$-weakly efficient solution of \eqref{rc},  i.e.,
\begin{equation*}
\nexists \; x\in \Omega: F_{\mathcal{U}}(x) \subseteq F_{\mathcal{U}}(\bar{x}) - \Int \R_+^m,
\end{equation*} 
or, equivalently,
\begin{equation*}
\forall \; x\in \Omega: F_{\mathcal{U}}(x) \nprec^u F_{\mathcal{U}}(\bar{x}).  
\end{equation*} 
 The set of robust weakly efficient solutions of \eqref{ump} will be denoted by $\wargmin \eqref{ump}.$ 

\end{Definition}
Note that, by definition, we have $\wargmin \eqref{ump} = \wargmin^u \eqref{rc}.$ It is  worth mentioning that further concepts of robust efficiency and robust strict efficiency exist, but they are out of the scope of this work. For a discussion on different concepts  see, for instance, \cite{Ide2016,Schoebel21}. 

We concentrate here only on robust weakly efficient points as they have richer theoretical properties, and we hope that a more detailed analysis for the other concepts can be later pursued.
\begin{Remark}
In the single-objective case, i.e., for $m = 1,$ it can be shown that $\bar{x} \in \wargmin \eqref{ump}$ if and only if $\bar{x}$ is an optimal solution of the problem 
        \begin{equation} \label{rc scalar}
        \begin{array}{rl}
        &\min\limits_{x} \; \sup\limits_{u \in \mathcal{U}}f(x,u) \\
                            &\;\textup{s.t.} \;\; x \in \Omega.\\
        \end{array}
\end{equation} Problem \eqref{rc scalar} is well known as the classical robust counterpart problem of \eqref{ump} in this case, see \cite{bentalghaouinemirovski2009}. Thus, \eqref{rc} is truly an extension of the min-max approach to the multiobjective setting. 
\end{Remark}

\section{A Connection between Set Relations}\label{section:vectorization}

The results in  \cite{EichfelderQuintanaRocktaeschel2022} are based on comparing sets with the lower type set relation and they cannot  be extended to other binary relations for sets in a  straightforward way. For robust weak efficiency, see Definition \ref{def:robustweaklyeff}, we work with the upper type set relation. 
For that reason we provide in this section results which give equivalent characterizations of $A\prec^u B$ by using the set relation $\prec^l$. 

In  \cite[Proposition 3.1 $(iv)$]{Baotammer2019} it was shown that  for any nonempty sets $A,B \subseteq \R^m$ the equivalence $$A \preceq^u B \Longleftrightarrow \left(A - \R^m_+\right)^c \preceq^l \left(B - \R^m_+\right)^c $$ holds.

However, we need an equivalent reformulation which is still based on comparing compact sets as this is required for applying Theorem \ref{thm:splmainresults}. For that reason we need more complicated constructions in this section by introducing suitable intersections. 
For the proofs we make use of the following functional.

To any nonempty compact set $A \subseteq \R^m,$ we define the functional  $\psi_A: \R^m \rightarrow \R$ by
\begin{equation}\label{eq:psiB}  
\psi_A(y):= \min\limits_{a \in A} \max_{j \in [m]} \{y_j-a_j\},
\end{equation} 
which is related to minimizing a special case of the  well-known Tammer-Weidner functional \cite{Tammerfuntcional}. 
We need this functional and its properties for proving our results on the binary relations for sets. 
Note that, in particular, we have $\psi_{\{0\}}(y) = \max_{j \in [m]} \{y_j\}.$ For simplicity, we will write $\psi$ in place of  $\psi_{\{0\}}.$ Furthermore, it is clear that 
\begin{equation}\label{eq:representabilitymax}
    -\R^m_+= \{y \in \R^m \mid \psi(y)\leq 0 \}, \quad - \Int \R^m_+ = \{y \in \R^m \mid \psi(y)< 0 \}.
\end{equation}

\begin{Lemma}\label{lem:psiB}
Let $A \subseteq \R^m$ be nonempty and compact, and consider the associated functional $\psi_A$ given in \eqref{eq:psiB}. Then, the following statements hold:
\begin{enumerate}
\item \label{item:propertiescontpsiB} $\psi_A$ is Lipschitz continuous on $\R^m,$ i.e., there exists a constant  $L > 0$ such that
$$\forall \; y^1, y^2 \in \R^m: |\psi_A(y^1) - \psi_A(y^2)|\leq L\|y^1 - y^2\|,$$
\item \label{item:propertiesmonotonpsiB} $\psi_A$ is $\R^m_+$-monotone, i.e., 
$\forall\; y^1, y^2 \in \R^m: y^1 \leq y^2 \Longrightarrow \psi_A(y^1) \leq \psi_A(y^2),$
\item \label{item:propertiestranslationinvpsiB} $\psi_A$  satisfies the translation invariance property with respect to $e,$ i.e., 
$$\forall\; y \in \R^m, t \in \R: \psi_A(y + te) = \psi_A(y) + t,$$
\item \label{item:representationB-K} $A- \R^m_+ = \left\{y \in \R^m \mid \psi_A(y)\leq 0\right\},$
\item \label{item:representationclB-Kc} $\cl \left(A- \R^m_+\right)^c = \left\{ y \in \R^m \mid \psi_A(y) \geq 0\right\},$
\end{enumerate}
\end{Lemma}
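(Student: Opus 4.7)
The plan is to treat the five parts in order, relying throughout on the explicit min-max structure of $\psi_A$ and on the fact that $A$ is compact so the outer minimum is attained.

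For \ref{item:propertiescontpsiB}, I would first note that for every fixed $a \in A$, the map $y \mapsto \max_{j\in[m]}\{y_j - a_j\}$ is $1$-Lipschitz with respect to $\|\cdot\|_\infty$, since $\max$ is non-expansive. Because $\|\cdot\|_\infty \leq \|\cdot\|$, it is also $1$-Lipschitz in the Euclidean norm. The inequality $|\min_a g(y^1,a) - \min_a g(y^2,a)| \leq \sup_a |g(y^1,a) - g(y^2,a)|$ then transfers the Lipschitz estimate from the inner family to $\psi_A$, yielding $L = 1$. For \ref{item:propertiesmonotonpsiB}, if $y^1 \leq y^2$ componentwise, then $y^1_j - a_j \leq y^2_j - a_j$ for every $a \in A$ and every $j \in [m]$, so the inner max and then the outer min are preserved. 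For \ref{item:propertiestranslationinvpsiB}, I just factor $t$ out of the inner max: $\max_j (y_j + t - a_j) = t + \max_j(y_j - a_j)$, and the constant $t$ passes unchanged through the outer minimum over $a \in A$.

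For \ref{item:representationB-K}, the double inclusion is established as follows. If $y \in A - \R^m_+$, pick $a \in A$ with $y \leq a$; then $\max_j(y_j - a_j) \leq 0$, hence $\psi_A(y) \leq 0$. Conversely, since $A$ is compact and the integrand is continuous in $a$, the minimum in \eqref{eq:psiB} is attained at some $\bar a \in A$, so $\psi_A(y) \leq 0$ gives $y_j \leq \bar a_j$ for all $j$, i.e. $y \in \bar a - \R^m_+ \subseteq A - \R^m_+$. This is the only step where compactness of $A$ is genuinely used.

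For \ref{item:representationclB-Kc}, I would combine \ref{item:representationB-K} with \ref{item:propertiescontpsiB} and \ref{item:propertiestranslationinvpsiB}. By \ref{item:representationB-K}, $(A - \R^m_+)^c = \{y \in \R^m \mid \psi_A(y) > 0\}$, and continuity of $\psi_A$ shows $\cl(A - \R^m_+)^c \subseteq \{\psi_A \geq 0\}$. For the reverse inclusion, take $y$ with $\psi_A(y) \geq 0$; if $\psi_A(y) > 0$ the point already lies in $(A-\R^m_+)^c$, and if $\psi_A(y) = 0$ the translation invariance property gives $\psi_A(y + k^{-1}e) = k^{-1} > 0$, so $y + k^{-1} e \in (A - \R^m_+)^c$ and this sequence converges to $y$. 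The main (though minor) obstacle is precisely this last point: one must rule out that the zero-level set of $\psi_A$ creates a ``plateau'' unreachable from the strict super-level set, and translation invariance is the clean tool that does so.
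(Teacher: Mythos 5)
Your proposal is correct and follows essentially the same route as the paper: parts \ref{item:propertiescontpsiB}--\ref{item:propertiestranslationinvpsiB} by direct computation with the min--max structure (the paper merely reduces them to the corresponding properties of $\psi=\psi_{\{0\}}$ using compactness of $A$), part \ref{item:representationB-K} by the same two inclusions with attainment of the minimum at some $\bar a\in A$, and part \ref{item:representationclB-Kc} by exactly the paper's argument combining continuity with translation invariance along $y+te$, $t\downarrow 0$.
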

\begin{proof} \ref{item:propertiescontpsiB}, \ref{item:propertiesmonotonpsiB}, \ref{item:propertiestranslationinvpsiB} If $A = \{0\},$ it is easy to see that $\psi$ satisfies the properties. For the general case we can take into account that $A$ is compact to deduce these properties from those of the functional  $\psi$.

\ref{item:representationB-K} Let $y \in A- \R^m_+.$ Then, $y = a- k$ for some $a \in A$ and $k \in \R^m_+.$ Taking into account \eqref{eq:representabilitymax}, we now deduce that  
$$\psi_A(y) \leq \psi(y-a) = \psi(-k) \leq 0.$$  Thus, $A- \R^m_+ \subseteq \left\{y \in \R^m \mid \psi_A(y)\leq 0\right\}.$ 

Suppose now that $y \in \R^m$ is given with $\psi_A(y) \leq 0.$ Then, because $A$ is compact and $\psi$ is continuous, there exists $a \in A$ such that $\psi_A(y) = \psi(y-a).$ Therefore, we get  $\psi(y-a) \leq 0,$ which implies $y\in \{a\} -\R^m_+ \subseteq A-\R^m_+$ according to \eqref{eq:representabilitymax}.

\ref{item:representationclB-Kc} From statement \ref{item:representationB-K}, we have $\cl \left(A- \R^m_+\right)^c = \cl \left\{y \in \R^m \mid \psi_A(y)> 0\right\}.$ Therefore, in order to finish the proof, we just need to show that  $$\cl \left\{y \in \R^m \mid \psi_A(y)> 0\right\} =  \left\{ y \in \R^m \mid \psi_A(y) \geq 0\right\}.$$ The inclusion $\cl \left\{y \in \R^m \mid \psi_A(y)> 0\right\} \subseteq  \left\{ y \in \R^m \mid \psi_A(y) \geq 0\right\}$ follows from the continuity of $\psi_A$ in statement \ref{item:propertiescontpsiB}. In order to see the second inclusion, fix $\bar y \in \R^m$ such that $\psi_A(\bar y)\geq 0.$ If $\psi_A(\bar y) > 0$ there is nothing to prove. 
Suppose now that $\psi_A(\bar y) =0.$ Then, according to statement \ref{item:propertiestranslationinvpsiB}, for every $t >0$ we have $$\psi_A(\bar y + te) = \psi_A(\bar y)+ t = t >0.$$ Thus, $\bar y+te \in  \left\{y \in \R^m \mid \psi_A(y)> 0\right\}$ for any $t>0.$ By letting $t \to 0,$ we deduce that $\bar y \in \cl \left\{y \in \R^m \mid \psi_A(y)> 0\right\},$ as desired.
\end{proof}

By using sublevel sets of this functional we can give a first reformulation for the set relation $\prec^u$.

\begin{Lemma}\label{lem:AulesB}
Let $A,B \subseteq \R^m$ be nonempty compact sets. Then, 

\begin{equation*}
A \prec^u B \Longleftrightarrow \exists\; \varepsilon >0 : A- \R^m_+ \subseteq \left\{y \in \R^m \mid \psi_B(y) \leq - \varepsilon\right\}.
\end{equation*}
\end{Lemma}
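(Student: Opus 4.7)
The plan is to reduce everything to a statement about the sign of $\psi_B$ on the set $A - \R^m_+$, using the properties collected in Lemma \ref{lem:psiB}. The first ingredient I would prove (or note as a strict analogue of item \ref{item:representationB-K}) is the representation
\[
B - \Int \R^m_+ \;=\; \bigl\{ y \in \R^m \mid \psi_B(y) < 0 \bigr\}.
\]
The inclusion ``$\subseteq$'' is immediate: if $y = b - k$ with $b \in B$ and $k \in \Int \R^m_+$, then $\psi_B(y) \leq \psi(y - b) = \psi(-k) < 0$ by \eqref{eq:representabilitymax}. For ``$\supseteq$'', compactness of $B$ together with continuity of $\psi$ yields some $b \in B$ attaining $\psi_B(y) = \psi(y-b) < 0$, which places $y$ in $b - \Int \R^m_+ \subseteq B - \Int \R^m_+$. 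Thus $A \prec^u B$ is equivalent to $\psi_B(a) < 0$ for every $a \in A$.

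For the direction ``$\Leftarrow$'', I would note that $A \subseteq A - \R^m_+$ (take $k = 0$), so the assumed inclusion gives $\psi_B(a) \leq -\varepsilon < 0$ for all $a \in A$, hence $A \subseteq B - \Int \R^m_+$ by the representation just established, i.e., $A \prec^u B$.

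For ``$\Rightarrow$'', the idea is to extract a uniform gap. Since $A$ is compact and $\psi_B$ is continuous by Lemma \ref{lem:psiB}\ref{item:propertiescontpsiB}, the maximum $M := \max_{a \in A} \psi_B(a)$ is attained, and by the equivalence above $M < 0$. Set $\varepsilon := -M/2 > 0$ (any positive number at most $-M$ works). For arbitrary $y \in A - \R^m_+$, write $y = a - k$ with $a \in A$, $k \in \R^m_+$; then $y \leq a$ component-wise, so $\R^m_+$-monotonicity of $\psi_B$ (Lemma \ref{lem:psiB}\ref{item:propertiesmonotonpsiB}) gives
\[
\psi_B(y) \;\leq\; \psi_B(a) \;\leq\; M \;\leq\; -\varepsilon,
\]
which is the required inclusion.

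There is no real obstacle here; the whole argument is a direct combination of compactness of $A$, continuity and monotonicity of $\psi_B$, and the sublevel-set representations of $B - \R^m_+$ and $B - \Int \R^m_+$. The only mildly subtle point is the need to first upgrade Lemma \ref{lem:psiB}\ref{item:representationB-K} to its strict analogue for $\Int\R^m_+$, since without that the equivalence between $A \prec^u B$ and the sign condition on $\psi_B$ over $A$ is not available.
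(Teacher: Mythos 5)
Your proof is correct: both directions go through, and the whole argument rests on the same ingredients as the paper's proof (compactness of $A$ to attain $\sup_{a\in A}\psi_B(a)$, continuity and $\R^m_+$-monotonicity of $\psi_B$, sublevel-set descriptions of $B-\R^m_+$ type sets). The one genuine difference is which auxiliary fact carries the implication ``$\Leftarrow$''. You first upgrade Lemma \ref{lem:psiB} \ref{item:representationB-K} to its strict analogue $B-\Int\R^m_+=\{y\in\R^m\mid \psi_B(y)<0\}$ (correctly using compactness of $B$ so that the minimum defining $\psi_B$ is attained) and then read off both implications from the sign of $\psi_B$ on $A$; this makes the clean equivalence $A\prec^u B\Longleftrightarrow \max_{a\in A}\psi_B(a)<0$ explicit and reusable. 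The paper instead avoids introducing this strict representation: for ``$\Leftarrow$'' it uses the translation invariance property (Lemma \ref{lem:psiB} \ref{item:propertiestranslationinvpsiB}) to get $\psi_B(a+\varepsilon e)\leq 0$, places $a+\varepsilon e$ in $B-\R^m_+$ via the non-strict representation \ref{item:representationB-K}, and then subtracts $\varepsilon e$ to land in $B-\Int\R^m_+$; for ``$\Rightarrow$'' it verifies $\varepsilon:=-\sup_{a\in A}\psi_B(a)>0$ by exhibiting $\bar b\in B$ with $\bar a<\bar b$, which is in substance the ``$\subseteq$'' half of your strict representation evaluated at the maximizer, and then applies monotonicity exactly as you do. So your route trades the paper's use of translation invariance for one extra (easy) representation lemma; both are equally rigorous, and yours is arguably slightly more transparent, while the paper's stays entirely within the properties it had already listed in Lemma \ref{lem:psiB}.
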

\begin{proof}
Suppose first that $A \prec^u B$ and set $\varepsilon:= - \sup_{a\in A} \psi_B(a).$ We claim that $\varepsilon >0.$ Indeed, because $A$ is compact and $\psi_B$ is continuous according to Lemma \ref{lem:psiB} \ref{item:propertiescontpsiB}, there exists an element  $\bar{a} \in A$ such that 

\begin{equation}\label{eq:infattainedpsiBwrtA}
\psi_B(\bar{a}) = \sup\limits_{a\in A} \psi_B(a).
\end{equation} Now, since $A \prec^u B,$ we can find an element $\bar{b} \in B$ such that $\bar{a} < \bar{b}.$ It then follows from \eqref{eq:infattainedpsiBwrtA} and \eqref{eq:representabilitymax} that
$$\varepsilon =  - \psi_B(\bar{a}) =  - \min\limits_{b\in B} \psi(\bar{a}-b) \geq -\psi(\bar{a}-\bar{b}) > 0.$$ Next, by the definition of $\varepsilon,$ we get that $A \subseteq \left\{y \in \R^m \mid \psi_B(y) \leq - \varepsilon\right\}.$ Using the $\R^m_+$- monotonicity of $\psi_B$ from Lemma \ref{lem:psiB}, we then deduce that 
\begin{equation}\label{eq:inclusiona-KepsilonpsiB}
A- \R^m_+ \subseteq \left\{y \in \R^m \mid \psi_B(y) \leq - \varepsilon\right\}.
\end{equation}

Conversely, assume that for some $\varepsilon >0$ the inclusion \eqref{eq:inclusiona-KepsilonpsiB} holds and fix $a \in A.$ Then, from Lemma \ref{lem:psiB} \ref{item:propertiestranslationinvpsiB} and \eqref{eq:inclusiona-KepsilonpsiB} we find that 

\begin{equation}\label{eq:a+epsiloneinB-K}
\psi_B(a+ \varepsilon e) = \psi_B(a) +\varepsilon \leq - \varepsilon +\varepsilon = 0.
\end{equation} According to Lemma \ref{lem:psiB} \ref{item:representationB-K}, the inequality in \eqref{eq:a+epsiloneinB-K} now implies that $a+ \varepsilon e \in B- \R^m_+.$ Thus, we obtain that
\begin{equation}\label{eq:ainB-IntK}
a = a + \varepsilon e - \varepsilon e \subseteq  B-\R^m_+ - \{\varepsilon e \}\subseteq  B -\Int \R^m_+.
\end{equation} Since $a$ was arbitrarily chosen in $A,$ the inclusion \eqref{eq:ainB-IntK}  proves that $A \prec^u B.$
\end{proof}

In the rest of the paper, for a  closed, convex, pointed  and solid cone $C \subseteq \R^m$ and arbitrary elements $y^1,y^2 \in \R^m,$ we define  the sets 
$$\left[y^1, y^2\right]_C:= \left(\{y^1\} + C\right) \cap \left(\{y^2 \}-  \R^m_+\right),
$$
and $$
 \left(y^1, y^2\right)_C:= \left(\{y^1\} + \Int C\right) \cap \left(\{ y^2 \}-  \Int \R^m_+\right).$$ 
Recall that a set $C\subseteq\R^m$ is a cone if $\lambda\geq 0$ and $c\in C$ imply $\lambda c\in C$. It is called pointed if $C\cap (-C)=\{0\}$ and solid if $\Int C\neq\emptyset$. For the cone $C$ we will make use of a set which is strictly contained in the ordering cone $\R^m_+$ in the sense that $C\setminus\{0\}\subseteq \Int \R^m_+$. As a consequence, the above sets satisfy 
 $\left[y^1, y^2\right]_C\subseteq  \left[y^1, y^2\right]:=\{y\in\R^m\mid y^1\leq y\leq y^2\}$ and 
$\left(y^1, y^2\right)_C\subseteq  \{y\in\R^m\mid y^1< y<  y^2\}$. 

\begin{Lemma}\label{lem:shiftinglemma}
Let $A \subseteq \R^m$ be nonempty and compact, and $C \subseteq \left(\Int \R^m_+\right)\cup \{0\} $ be a closed, convex, pointed and solid cone. Suppose that the elements $\ell b, ub(A) \in \R^m$ are such that $A \subseteq \left(\ell b,ub(A)\right)_C.$ Then, 
$$\cl \left(A- \R^m_+\right)^c\cap \left(\{\ell b\} + C\right) \subseteq \left( \cl \left(A- \R^m_+\right)^c\cap \left[\ell b, ub(A)\right]_C\right)  + \R^m_+.$$
\end{Lemma}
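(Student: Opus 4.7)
The strategy is to fix an arbitrary $z \in \cl(A-\R^m_+)^c \cap (\{\ell b\} + C)$ and exhibit some $z' \in \cl(A-\R^m_+)^c \cap [\ell b, ub(A)]_C$ with $z - z' \in \R^m_+$. By Lemma \ref{lem:psiB} \ref{item:representationclB-Kc}, the first condition on $z$ is equivalent to $\psi_A(z) \geq 0$, and by assumption we may write $z = \ell b + c$ for some $c \in C$. I would then parametrize the segment $z_\lambda := \ell b + (1-\lambda)c$ for $\lambda \in [0,1]$. This stays in $\{\ell b\} + C$ because $C$ is a cone, satisfies $z_0 = z$ and $z_1 = \ell b$, and is componentwise non-increasing in $\lambda$ since $c \in C \subseteq \R^m_+$.

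The crux is to locate $\lambda^* \in [0,1)$ with $\psi_A(z_{\lambda^*}) = 0$. Because $A \subseteq \{\ell b\} + \Int C \subseteq \{\ell b\} + \Int \R^m_+$, every $a \in A$ satisfies $a > \ell b$ componentwise; compactness of $A$ together with continuity of $a \mapsto \psi(\ell b - a)$ then yields $\psi_A(\ell b) < 0$. Combined with $\psi_A(z_0) = \psi_A(z) \geq 0$ and the continuity of $\psi_A$ from Lemma \ref{lem:psiB} \ref{item:propertiescontpsiB}, the intermediate value theorem applied to $\lambda \mapsto \psi_A(z_\lambda)$ furnishes such a $\lambda^*$; concretely, $\lambda^* := \inf\{\lambda \in [0,1] \mid \psi_A(z_\lambda) \leq 0\}$ lies in $[0,1)$ and satisfies $\psi_A(z_{\lambda^*}) = 0$ by continuity on either side of the infimum.

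To finish, I would invoke compactness of $A$ once more to extract $a^* \in A$ attaining the minimum in $\psi_A(z_{\lambda^*}) = \min_{a \in A}\psi(z_{\lambda^*} - a) = 0$. The resulting identity $\max_{j \in [m]}(z_{\lambda^*,j} - a^*_j) = 0$ forces $z_{\lambda^*} \leq a^*$ componentwise, and combining with $a^* < ub(A)$ (from $A \subseteq ub(A) - \Int \R^m_+$) yields $z_{\lambda^*} \leq ub(A)$. Together with $z_{\lambda^*} \in \{\ell b\} + C$ this places $z_{\lambda^*}$ in $\cl(A-\R^m_+)^c \cap [\ell b, ub(A)]_C$, and the decomposition $z = z_{\lambda^*} + \lambda^* c$ with $\lambda^* c \in \R^m_+$ concludes the argument.

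The main subtlety lies in the choice of retraction direction. A naive attempt, such as replacing $z$ with the componentwise minimum of $z$ and $ub(A)$, preserves both $\psi_A \geq 0$ and the upper bound but can leave $\{\ell b\} + C$ whenever $C$ is strictly narrower than $\R^m_+$. Contracting instead along the segment $z_\lambda$ circumvents this because $C$ is a cone; the hypothesis $C\setminus\{0\} \subseteq \Int \R^m_+$ is exactly what guarantees $\psi_A(\ell b) < 0$, so that the crossing $\lambda^*$ exists, while the compactness of $A$ is what converts the scalar equation $\psi_A(z_{\lambda^*}) = 0$ into componentwise domination by a specific $a^* \in A$.
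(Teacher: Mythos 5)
Your proof is correct and follows essentially the same route as the paper: retract the point along the segment toward $\ell b$, use the continuity and translation properties of $\psi_A$ with the intermediate value theorem to find a point where $\psi_A$ vanishes, and verify it lies in $[\ell b, ub(A)]_C$. The only cosmetic difference is that you inline the argument behind Lemma \ref{lem:psiB} \ref{item:representationB-K} (extracting the minimizing $a^*\in A$) where the paper simply cites that statement.
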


\begin{proof}
Let $y \in \cl \left(A- \R^m_+\right)^c\cap \left(\{\ell b\} + C \right).$  Then, we have  $v:= y- \ell b \in C$ and,
according to Lemma \ref{lem:psiB} \ref{item:representationclB-Kc},
that 
$\psi_A(y) \geq 0$. Furthermore, since $A \subseteq \{\ell b\} + \Int C$ and $A$ is nonempty, we can take an element $a \in A$ to deduce that
\begin{equation}\label{eq:psiy^Ly<0}
\psi_A(y- v) = \psi_A(\ell b) \leq \psi(\ell b-a) < 0,
\end{equation} where the last inequality follows from \eqref{eq:representabilitymax} and the fact that $\Int C \subseteq \Int \R^m_+.$ Applying now the intermediate value theorem, we get the existence of $\bar{t} \in [0,1)$ such that $\bar{y}:= y- \bar{t}v$ satisfies $\psi_A(\bar{y}) = 0.$ Thus, from Lemma \ref{lem:psiB} \ref{item:representationclB-Kc}, we obtain that $\bar{y} \in  \cl \left(A- \R^m_+\right)^c.$ Applying also Lemma \ref{lem:psiB} \ref{item:representationB-K}, we get that $\bar{y} \in A-\R^m_+ \subseteq \{ub(A)\}- \R^m_+.$ Furthermore, because $\bar{t}<1,$ we find that
$$\bar{y} - \ell b =y-\bar t v-\ell b= y- \bar{t}(y-\ell b)-\ell b = (1-\bar t)(y-\ell b)=(1-\bar{t})v \in C.$$ This shows that $\bar{y} \in \cl \left(A- \R^m_+\right)^c\cap \left[\ell b,ub(A)\right]_C .$ The statement then follows from the fact that $y-\bar{y} = \bar{t}v \in C$ and $C\subseteq \R^m_+$.
\end{proof}

\begin{Proposition}\label{prop: cl A-K equivalent}
Let $A \subseteq \R^m$ be nonempty and compact. Then,
\begin{equation*}
    \cl \left(A- \R^m_+\right)^c =  \left(A- \Int \R^m_+\right)^c.
\end{equation*}
\end{Proposition}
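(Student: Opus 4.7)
The plan is to reduce the proposition to a strict-inequality analogue of Lemma~\ref{lem:psiB}\ref{item:representationB-K} and then combine with Lemma~\ref{lem:psiB}\ref{item:representationclB-Kc}. More precisely, I would establish the auxiliary characterization
\begin{equation*}
A-\Int\R^m_+ = \{y\in\R^m \mid \psi_A(y) < 0\},
\end{equation*}
because then taking complements gives $(A-\Int\R^m_+)^c = \{y\in\R^m \mid \psi_A(y)\geq 0\}$, and the right-hand side equals $\cl(A-\R^m_+)^c$ by Lemma~\ref{lem:psiB}\ref{item:representationclB-Kc}.

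To prove the auxiliary characterization, I would argue in the same spirit as Lemma~\ref{lem:psiB}\ref{item:representationB-K}, just replacing $\R^m_+$ by $\Int\R^m_+$ and non-strict by strict inequalities. For the inclusion ``$\subseteq$'', given $y=a-k$ with $a\in A$ and $k\in\Int\R^m_+$, the definition of $\psi_A$ and \eqref{eq:representabilitymax} yield $\psi_A(y)\leq \psi(y-a)=\psi(-k)<0$. For the reverse inclusion, suppose $\psi_A(y)<0$; by the continuity of $\psi$ together with the compactness of $A$, the minimum in \eqref{eq:psiB} is attained, so $\psi_A(y) = \psi(y-a)<0$ for some $a\in A$. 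Then \eqref{eq:representabilitymax} forces $y-a\in -\Int\R^m_+$, whence $y\in A-\Int\R^m_+$.

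Putting the two pieces together finishes the proof: applying Lemma~\ref{lem:psiB}\ref{item:representationclB-Kc} to express $\cl(A-\R^m_+)^c$ via $\psi_A$, and using the auxiliary characterization above to express $(A-\Int\R^m_+)^c$ via $\psi_A$, both sets coincide with $\{y\in\R^m\mid \psi_A(y)\geq 0\}$.

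There is no real obstacle here, as the compactness of $A$ (needed only to guarantee that the minimum defining $\psi_A$ is attained) is assumed and the rest is a direct parallel of an argument already used for Lemma~\ref{lem:psiB}\ref{item:representationB-K}. The only point one has to be mildly careful about is that the monotonicity/translation-invariance properties of $\psi_A$ are not needed; the whole proof rests purely on the representation \eqref{eq:representabilitymax} of $-\R^m_+$ and $-\Int\R^m_+$ in terms of $\psi$, and on the attainment of the minimum in the definition of $\psi_A$.
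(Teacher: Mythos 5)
Your proof is correct, but it takes a genuinely different route from the paper. The paper argues directly at the level of sets: the inclusion $\cl\left(A-\R^m_+\right)^c \subseteq \left(A-\Int\R^m_+\right)^c$ follows from $A-\Int\R^m_+\subseteq A-\R^m_+$ together with the fact that $\left(A-\Int\R^m_+\right)^c$ is closed (as $A-\Int\R^m_+$ is open), and the reverse inclusion is obtained by approximating an arbitrary $y\in\left(A-\Int\R^m_+\right)^c$ by the points $y+k^{-1}e$, which must lie in $\left(A-\R^m_+\right)^c$ since otherwise $y\in A-\R^m_+-\{k^{-1}e\}\subseteq A-\Int\R^m_+$, a contradiction. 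You instead route everything through the functional $\psi_A$: you prove the strict-sublevel analogue $A-\Int\R^m_+=\{y\in\R^m\mid\psi_A(y)<0\}$ of Lemma \ref{lem:psiB} \ref{item:representationB-K} (your argument for it is sound: the minimum defining $\psi_A$ is attained by compactness of $A$ and continuity of $\psi$, and \eqref{eq:representabilitymax} does the rest) and then combine it with Lemma \ref{lem:psiB} \ref{item:representationclB-Kc}; there is no circularity, since that lemma is proved independently of the proposition. What each approach buys: yours reuses the scalarization machinery already in place and makes the level-set picture of both sets explicit, which fits nicely with how the proposition is later used alongside $\psi_B$; the paper's argument is more elementary and in fact never uses compactness of $A$ (the direct set-theoretic proof works for any nonempty $A$), whereas your reverse inclusion in the auxiliary characterization genuinely needs attainment of the minimum, hence compactness — harmless here, since the proposition assumes it, but worth noting as the only place where your proof is less general.
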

\begin{proof}
First, from $A - \Int \R^m_+  \subseteq  A - \R^m_+$ we deduce that $\left(A- \R^m_+\right)^c \subseteq \left(A- \Int \R^m_+\right)^c.$ Therefore, by  taking the closure on both sides of this inclusion we  obtain 
$$\cl \left(A- \R^m_+\right)^c \subseteq \left(A- \Int \R^m_+\right)^c.$$ 

In order to see the other inclusion, take any $y \in \left(A- \Int \R^m_+\right)^c$ and consider the sequence $\{y + k^{-1}e\}_{k \in \N}.$ Then, $\{y + k^{-1}e\}_{k \in \N} \subseteq \left(A- \R^m_+\right)^c.$ Otherwise, for some $k \in \N$  we have $y + k^{-1}e \in A- \R^m_+,$ which would imply $y \in A - \R^m_+ - \{k^{-1}e\} \subseteq A - \Int \R^m_+,$ a contradiction. Since $y = \lim_{k \to \infty} y + k^{-1}e$ and $y$ was chosen arbitrarily, it follows that $\left(A- \Int \R^m_+\right)^c \subseteq  \cl \left(A- \R^m_+\right)^c.$
\end{proof}

The next theorem gives the desired reformulation of the binary relation $\prec^u$ by using $\prec ^l$, for compact sets $A$ and $B$ for which a strict common lower bound $\ell b\in\R^m$ w.r.t.\ $C$ exists in the sense that $A\subseteq \{\ell b\}+\Int C$ and $B\subseteq \{\ell b\}+\Int C$. 
The existence of upper bounds $ub(A)$ and $ub(B)$ with 
$A\subseteq \{ub(A)\}-\Int \R^m_+$ and $B\subseteq \{ub(B)\}-\Int \R^m_+$ is not restrictive, as the sets $A$ and $B$ have been assumed to be compact and one can take for $ub(A)$, for instance, a point $u\in\R^m$ which is component-wise defined by 
$u_j:=(\max_{a\in A}a_j)+\gamma$ for $j\in[m]$ and some  $\gamma>0$. For an illustration of the equivalence of the strict binary relations of sets from the next theorem see Figure \ref{fig:equiv}.

\begin{figure}[h]
	\centering
	\begin{tikzpicture}[scale=1.4]
	
	
	\draw[thick, fill = gray!100]  (1.25,1.25) -- (-0.8552631578947368, 1.25) -- (-0.9342105263157894, 0.75) -- (0, 0.75) -- (0.75, 0) --(0.75, -0.9891304347826086) -- (1.25, -0.9239130434782608) -- (1.25, 1.25);
	\draw[thick, fill = gray!100]  (2.3, 1.25) -- (-0.8552631578947368, 1.25) -- (-0.9342105263157894, 0.75) -- (0, 0.75) -- (0.75, 0) --(0.75, -0.9891304347826086) -- (2.3, -0.79) -- (2.3, 1.25);
	
	\draw[thick, fill = gray!100] (4.25,3.25) -- (-0.5394736842105262, 3.25) -- (-0.6578947368421052, 2.5) -- (3, 2.5) -- (3.5, 2) -- (3.5, -0.6304347826086956) -- (4.25, -0.5326086956521738) -- (4.25,3.25);

	\draw[thick, fill = gray!50] (0,0) circle(0.75);
	\draw[thick, fill = gray!50] (3,2) circle(0.5);
	\node[black] at (0,0){$A$};
	\node[black] at (3,2){$B$};
	
	\filldraw[thick] (-1.25,-1.25) circle(1pt);
	\node[below left] at (-1.25,-1.25){$\ell b$};
	
	\filldraw[thick] (2.3,1.25) circle(1pt);
	\node[above] at (2.3, 1.25){$ub(A)$};
	
	\filldraw[thick] (4.25,3.25) circle(1pt);
	\node[above] at (4.25,3.25){$ub(B)$};
	
	\draw[dashed] (3,2.5) -- (-1.5,2.5);
	\draw[dashed] (3.5,2) -- (3.5,-1.5);
	
	\draw[thick] (-0.5,3.5) -- (-1.25,-1.25); 
	\draw[thick] (-1.25,-1.25) -- (4.5,-0.5);
	
	\draw[dashed] (-0.93,0.75) -- (-0.93,3.5);
	\draw[dashed] (0.75, -0.99) -- (4.5, -0.99);
	
	\node[above right] at (-1.2,-1.1){$\ell b + C$};
	\node[] at (0.75,1){$\cl \left(A- \R^m_+\right)^c \cap \left[\ell b,ub(A)\right]_C$};
	\node[] at (1.8,2.9){$\cl \left(B- \R^m_+\right)^c \cap \left[\ell b,ub(B)\right]_C$};
	
	\end{tikzpicture}
	
	\caption{Illustration of the equivalence in Theorem \ref{thm:uppertolower}\label{fig:equiv}}
\end{figure}
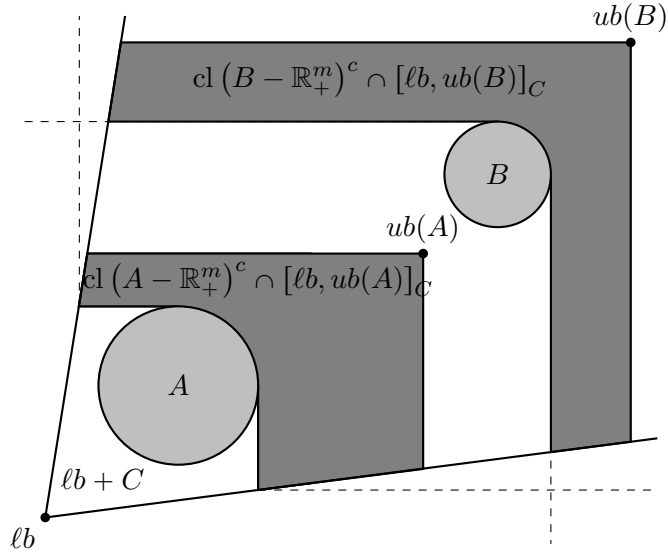

\begin{Theorem}\label{thm:uppertolower}
Let $A,B \subseteq \R^m$ be nonempty compact sets. Suppose that there exists  a closed, convex, pointed and solid cone $C \subseteq (\Int \R^m_+) \cup \{0\},$ together with elements $\ell b, ub(A),ub(B) \in \R^m,$  such that $ A \subseteq \left(\ell b,ub(A)\right)_C$ and $B \subseteq \left(\ell b,ub(B)\right)_C. $  Then, the following  statements are equivalent:
\begin{enumerate}
\item \label{item:AulessB} $A\prec^u B,$
\item \label{item:AulessBcomplementintersection}$\cl \left(A- \R^m_+\right)^c \cap \left[\ell b,ub(A)\right]_C \prec^l \cl \left(B- \R^m_+\right)^c\cap \left[\ell b,ub(B)\right]_C.$ 
\end{enumerate}
\end{Theorem}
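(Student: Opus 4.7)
The plan is to prove the two implications separately, using Lemma \ref{lem:AulesB} to rephrase $\prec^u$ in terms of the functional $\psi_B$ together with a uniform gap $\varepsilon>0$, and Lemma \ref{lem:shiftinglemma} to slide points into the box $[\ell b,ub(\cdot)]_C$ as needed.

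For $\ref{item:AulessB}\Rightarrow \ref{item:AulessBcomplementintersection}$: Lemma \ref{lem:AulesB} applied to $A\prec^u B$ yields $\varepsilon>0$ with $\max_{a\in A}\psi_B(a)\leq -\varepsilon$, so for each $a\in A$ there exists $b\in B$ with $b\geq a+\varepsilon e$. A short calculation with the definitions of $\psi_A,\psi_B$ then gives the pointwise bound $\psi_A(y)\geq \psi_B(y)+\varepsilon$ for every $y\in\R^m$. Now fix any $y\in\cl(B-\R^m_+)^c\cap[\ell b,ub(B)]_C$. Lemma \ref{lem:psiB}\ref{item:representationclB-Kc} gives $\psi_B(y)\geq 0$, hence $\psi_A(y)\geq \varepsilon>0$; in particular $y\in \cl(A-\R^m_+)^c\cap (\{\ell b\}+C)$, so Lemma \ref{lem:shiftinglemma} applies to $A$ and delivers $\bar y\in \cl(A-\R^m_+)^c\cap [\ell b,ub(A)]_C$ of the form $\bar y=y-\bar t v$ with $v=y-\ell b\in C$, $\bar t\in[0,1)$ and $\psi_A(\bar y)=0$. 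Since $\psi_A(y)>0>\psi_A(\bar y)$ forces $\bar t>0$, and since $B\subseteq\{\ell b\}+\Int C$ yields $\psi_B(\ell b)<0$, so that $y\neq\ell b$ and hence $v\neq 0$, we have $\bar t v\in C\setminus\{0\}\subseteq \Int \R^m_+$. Therefore $\bar y<y$, which is exactly the strict $\prec^l$ condition.

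For $\ref{item:AulessBcomplementintersection}\Rightarrow \ref{item:AulessB}$: I would argue by contradiction. Assuming $\ref{item:AulessBcomplementintersection}$ but $A\not\prec^u B$, Lemma \ref{lem:AulesB} combined with compactness of $A$ and continuity of $\psi_B$ produces $a\in A$ with $\psi_B(a)\geq 0$, i.e.\ $a\in \cl(B-\R^m_+)^c$ by Lemma \ref{lem:psiB}\ref{item:representationclB-Kc}. Because $a\in A\subseteq \{\ell b\}+\Int C\subseteq \{\ell b\}+C$, Lemma \ref{lem:shiftinglemma} applied to $B$ yields $a^{*}\in \cl(B-\R^m_+)^c\cap [\ell b,ub(B)]_C$ with $a^{*}\leq a$. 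The $\prec^l$ hypothesis then produces $z\in \cl(A-\R^m_+)^c\cap[\ell b,ub(A)]_C$ with $z<a^{*}\leq a$, whence $z\in\{a\}-\Int \R^m_+\subseteq A-\Int \R^m_+$. But by Proposition \ref{prop: cl A-K equivalent} we have $\cl(A-\R^m_+)^c=(A-\Int \R^m_+)^c$, contradicting $z\in\cl(A-\R^m_+)^c$.

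The step I expect to require most care is enforcing \emph{strictness} in the forward direction: the shifting lemma by itself only guarantees $\bar y\leq y$, so I need the quantitative gap $\psi_A(y)\geq\varepsilon>0$ coming from the strict relation $A\prec^u B$ to force both $\bar t>0$ and $v\neq 0$, which together with $C\setminus\{0\}\subseteq \Int \R^m_+$ upgrade $\leq$ to $<$. Aside from that, the argument is a bookkeeping exercise in combining Lemmas \ref{lem:AulesB} and \ref{lem:shiftinglemma} with Proposition \ref{prop: cl A-K equivalent}.
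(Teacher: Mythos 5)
Your proof is correct, and your reverse implication is essentially the paper's own argument: by contraposition you pick $a\in A$ with $a\notin B-\Int\R^m_+$, shift it into the box via Lemma \ref{lem:shiftinglemma}, and contradict the $\prec^l$ relation; your final contradiction via Proposition \ref{prop: cl A-K equivalent} is if anything cleaner than the paper's sequence argument. The forward implication is organized differently. The paper keeps the gap on the $\psi_B$ side: it shows $\{y\in\R^m\mid\psi_B(y)\geq-\varepsilon\}\subseteq\cl\left(A-\R^m_+\right)^c$, moves the given point $y$ down along $v=y-\ell b$ by an intermediate value theorem argument to a point $\bar y'$ with $\psi_B(\bar y')\geq-\varepsilon$, and only then invokes Lemma \ref{lem:shiftinglemma} as a black box to obtain $\bar y\leq\bar y'<y$. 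You instead derive the uniform pointwise bound $\psi_A\geq\psi_B+\varepsilon$ (which is correct and a nice observation), conclude that $y$ itself lies in $\cl\left(A-\R^m_+\right)^c\cap\left(\{\ell b\}+C\right)$, and extract strictness afterwards. The one caveat is that your strictness step relies on the internal construction in the proof of Lemma \ref{lem:shiftinglemma} (that $\bar y$ may be taken of the form $y-\bar t v$ with $\psi_A(\bar y)=0$ and $\bar t\in[0,1)$), which the \emph{statement} of the lemma does not provide --- it only yields $\bar y\leq y$; also, your inequality $\psi_A(y)>0>\psi_A(\bar y)$ should read $\psi_A(y)>0=\psi_A(\bar y)$. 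This is easily repaired without reopening the lemma: since $\psi_A(y)\geq\varepsilon>0$ and $v=y-\ell b\in C\setminus\{0\}\subseteq\Int\R^m_+$ (your argument for $v\neq 0$ via $\psi_B(\ell b)<0$ is fine), Lipschitz continuity of $\psi_A$ gives a small $\delta\in(0,1)$ with $\psi_A(y-\delta v)\geq0$ and $y-\delta v-\ell b=(1-\delta)v\in C$, so applying Lemma \ref{lem:shiftinglemma} at $y-\delta v$ yields $\bar y\leq y-\delta v<y$ --- in spirit exactly the paper's perturbation, carried out with $\psi_A$ instead of $\psi_B$.
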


\begin{proof} Note that from Lemma \ref{lem:psiB} \ref{item:representationclB-Kc} we have 
\begin{equation}\label{eq:complementclosureBfinished}
\cl \left(B- \R^m_+\right)^c \cap \left[\ell b,ub(B)\right]_C = \left\{y \in \R^m \mid \psi_B(y)\geq 0\right\} \cap \left[\ell b,ub(B)\right]_C.
\end{equation}

We start by showing that \ref{item:AulessB} implies \ref{item:AulessBcomplementintersection} and hence we suppose 
that $A \prec^u B.$ Then, according to Lemma \ref{lem:AulesB}, we can find $\varepsilon > 0$ such that $A- \R^m_+ \subseteq \left\{y \in \R^m \mid \psi_B(y) \leq - \varepsilon\right\}.$ Taking complements and closure in this inclusion,  we get
\begin{equation}\label{eq:complementclosureA}
\cl \left\{y \in \R^m \mid \psi_B(y) > - \varepsilon\right\} \subseteq   \cl \left(A- \R^m_+\right)^c.
\end{equation} Furthermore, arguing similarly to the proof of Lemma \ref{lem:psiB} \ref{item:representationclB-Kc}, we find that 
$$\cl \left\{y \in \R^m \mid \psi_B(y) > - \varepsilon\right\} = \left\{y \in \R^m \mid \psi_B(y) \geq - \varepsilon\right\}.$$  Thus, \eqref{eq:complementclosureA} is equivalent to the inclusion 
\begin{equation}\label{eq:complementpsiBgeqepsilon}
\left\{y \in \R^m \mid \psi_B(y) \geq - \varepsilon\right\} \subseteq   \cl \left(A- \R^m_+\right)^c,
\end{equation} which implies 
\begin{equation}\label{eq:complementclosureAfinished}
 \left\{y \in \R^m \mid \psi_B(y)\geq -\varepsilon\right\} \cap \left[\ell b,ub(A)\right]_C \subseteq  \cl \left(A- \R^m_+\right)^c \cap \left[\ell b,ub(A)\right]_C.
\end{equation} Fix now $y \in  \cl \left(B- \R^m_+\right)^c\cap \left[\ell b,ub(B)\right]_C$ and the element $v:= y - \ell b \in C.$ Then, because $B \subseteq \{\ell b\} + \Int C,$ it is clear that $\ell b \in  B- \Int  \R^m_+.$  As $y\in \cl \left(B- \R^m_+\right)^c$, we have by Proposition \ref{prop: cl A-K equivalent} that  $v\neq 0,$ and, as $v\in C\subseteq (\Int \R^m_+) \cup \{0\}$, that $v \in \Int \R^m_+.$ 

Now, according to \eqref{eq:complementclosureBfinished}, we know that $\psi_B(y)\geq 0.$ Therefore, using the continuity of $\psi_B$ by Lemma \ref{lem:psiB} \ref{item:propertiescontpsiB}, we deduce the existence  of $\bar{t} \in (0,1)$ such that the vector $\bar{y}':= y- \bar{t}v$ satisfies $\psi_B(\bar{y}')\geq -\varepsilon.$ On the other hand, for such $\bar{t},$ we also get
\begin{equation}\label{eq:differences}
\bar{y}' - \ell b = y- \bar{t}(y-\ell b)-\ell b = (1-\bar{t})v \in C.
\end{equation} Thus, the element $\bar{y}'$ satisfies $\psi_B(\bar{y}')\geq -\varepsilon$ and $  \bar{y}' \in \{\ell b\} + C.$ Taking into account \eqref{eq:complementpsiBgeqepsilon}, we then obtain
\begin{equation}\label{eq:y^tH(A)c}
\bar{y}' \in \cl \left(A- \R^m_+\right)^c \cap \left(\{\ell b \}+ C\right).
\end{equation}  We can now  apply Lemma \ref{lem:shiftinglemma} to find an element $\bar{y} \in \cl \left(A- \R^m_+\right)^c\cap \left[\ell b,ub(A)\right]_C $ such that $\bar{y} \leq \bar{y}'.$ Since $v  \in \Int \R^m_+,$ this then implies 
$\bar{y}\leq \bar{y}' < y,$ as desired. As $y$ was chosen arbitrarily we are done with showing that  \ref{item:AulessB} implies \ref{item:AulessBcomplementintersection}.

It remains to show that  \ref{item:AulessBcomplementintersection} implies   \ref{item:AulessB}. For doing so, we prove that  $A \nprec^u B$  implies that \ref{item:AulessBcomplementintersection} is not satisfied. If $A \nprec^u B$, it is possible to find $a \in A$ such that $a \notin B - \Int \R^m_+.$ Thus, by Proposition \ref{prop: cl A-K equivalent}, we have $a \in \cl \left(B- \R^m_+\right)^c\cap \left(\{\ell b\}+C\right).$ Applying now Lemma \ref{lem:shiftinglemma}, we find $\bar{a} \in   \cl \left(B- \R^m_+\right)^c\cap \left[\ell b,ub(B)\right]_C$ such that $\bar{a} \leq a.$ In order to finish the proof, it suffices to show that
$$\bar{a} \notin\left(  \cl \left(A- \R^m_+\right)^c\cap \left[\ell b,ub(A)\right]_C \right)+\Int \R^m_+.$$ This is indeed the case, as otherwise there exists $\bar{y} \in  \cl \left(A- \R^m_+\right)^c\cap \left[\ell b,ub(A)\right]_C $ such that $\bar{y} < \bar{a}.$ Therefore, we can find a sequence $\{y^k\}_{k \in \N} \subseteq \left(A- \R^m_+\right)^c$ such that $y^k \to \bar{y}.$ Because $\bar{y} < \bar{a},$ we then have that $y^k < \bar{a}$ for $k$ large enough. We then deduce that
$$y^k \in \{\bar{a}\} - \Int \R^m_+ \subseteq\{ a\} - \R^m_+ -\Int \R^m_+  \subseteq A- \R^m_+,$$
a contradiction. The proof is complete.
\end{proof}

The following example shows that the use of a cone $C\subseteq (\Int \R^m_+)\cup \{0\}$ is essential for the result in Theorem \ref{thm:uppertolower}.

\begin{Example}
For $A=\{y\in\R^2\mid \|y-(-1,0)^\top\|_2\leq 1\}$ and
$B=\{y\in\R^2\mid \|y-(2,0)^\top\|_2\leq 2\}$
it holds
$A\prec^uB$. For $C=\R^m_+$ and $\ell b=(-2.1, -2.1)^\top$, $ub(A)=(0.1,1.1)^\top$, and $ub(B)=(4.1,2.1)^\top$ we have that $A\subseteq (\ell b, ub(A))_C$ and $B\subseteq (\ell b, ub(B))_C$, but only 
$\cl \left(A- \R^m_+\right)^c \cap \left[\ell b,ub(A)\right]_C \preceq^l \cl \left(B- \R^m_+\right)^c\cap \left[\ell b,ub(B)\right]_C$ and not $$\cl \left(A- \R^m_+\right)^c \cap \left[\ell b,ub(A)\right]_C \prec^l \cl \left(B- \R^m_+\right)^c\cap \left[\ell b,ub(B)\right]_C.$$ 
\end{Example}

\section{Epigraphical Reformulations}

This section is devoted to the  formalization of an epigraphical reformulation for \eqref{ump} and the description of its main properties. As a motivation we start by recalling that  for uncertain single-objective optimization problems we have that 
\begin{equation}\label{eq:epi ref scalar}
\begin{array}{ll}
\min\limits_{x,y} \; y   \\
\;\;  \textup{s.t.} \;\; f(x,u)\leq y  \; \;\;\forall \; u\in \mathcal{U}, \\
\;\; \;\;\;\;\;\;\; x \in \Omega
\end{array} 
\end{equation} is an equivalent formulation of the already discussed (single-objective) robust counterpart  \eqref{rc scalar}. This is meant  in the sense that some point $\bar{x}$ solves \eqref{rc scalar} if and only if there exists some  $\bar{y} \in \R$ such that $ (\bar{x}, \bar{y})$   solves  \eqref{eq:epi ref scalar}. Considering \eqref{eq:epi ref scalar} instead of \eqref{rc scalar} has the important  theoretical advantage that semi-infinite methods can be applied in general nonconvex settings. Moreover,  in case of  convexity, reformulations of the semi-infinite constraints are possible and already well studied, see for example \cite{BenTalHertogVial2015, bentalghaouinemirovski2009}.

Thus, such an epigraphical reformulation is very attractive from the computational point of view. The issue is that \eqref{rc} is now a set optimization problem and it is not clear what the epigraphical reformulation is in this more general case. In order to extend \eqref{eq:epi ref scalar} to deal with the more general robust counterpart \eqref{rc} in the multiobjective setting,  the following concept of  proper boundedness will be important for being able to apply the results from Theorem \ref{thm:uppertolower}.

\begin{Definition}\label{def:properlowerboundedness}
Let Assumption \ref{basic_setup} hold. We say that the function $f$ is properly lower bounded on $\Omega \times \mathcal{U}$ if there exists $\ell b \in \R^m$ and a closed, convex, pointed and solid cone $C$ such that $C \subseteq (\Int \R^m_+) \cup \{0\} $ and $f \left(\Omega \times \mathcal{U}\right) \subseteq \{\ell b \}+ \Int C.$ In that case, we also say that $f$ is properly lower bounded on $\Omega \times \mathcal{U}$ with respect to $\ell b$ and $C.$
\end{Definition}

\begin{Remark}
There are already boundedness concepts for functions which are related to the above definition of proper boundedness.
The function  $f$ is called lower bounded  on $\Omega\times \mathcal{U}$ if there exists a point $\ell b \in \R^m$ such that $f \left(\Omega\times \mathcal{U}\right) \subseteq \{\ell b\} + \Int \R^m_+.$  Moreover, the function $f$ is said to be bounded  on $\Omega\times \mathcal{U}$ if the set $f \left(\Omega\times \mathcal{U}\right)$ is a bounded subset of $\R^m.$

It is then easy to verify that it holds under our assumptions that:
\begin{enumerate}
\item \label{item:plb<lb} If the function $f$ is properly lower bounded on $\Omega\times \mathcal{U},$ then it is also lower bounded.
\item \label{item:b<plb} If the function $f$ is bounded on $\Omega\times \mathcal{U},$ then it is properly lower bounded. In particular, if $\Omega$ is compact, then $f$ is properly lower bounded on $\Omega \times \mathcal{U}$.
\item \label{item:m=1=>plb=lb} If $m = 1,$ then the concepts of lower boundedness and proper lower boundedness of $f$  on $\Omega\times \mathcal{U}$ coincide.
\end{enumerate}
\end{Remark}

In this section we work under the following assumption.
\begin{Assumption}\label{main_assumption}
In addition to Assumption \ref{basic_setup}, let $\ell b \in \R^m$ and a closed, convex, pointed, solid cone $C \subseteq \Int \R_{+}^m \cup \{0\}$ be given, and suppose that $f$ is properly lower bounded on $\Omega \times \mathcal{U}$ with respect to $\ell b$ and $C.$ 
\end{Assumption}

Next we consider  the vector-valued map $ub:\Omega \rightarrow \R^m$ defined by
\begin{equation}\label{eq:ub def}
  ub(x):= \begin{pmatrix} \sup\limits_{u \in \mathcal{U}} f_1(x,u) \\ \vdots \\ \sup\limits_{u \in \mathcal{U}} f_m(x,u) \end{pmatrix} + e
\end{equation} and the set-valued mapping $F: \Omega \rightrightarrows \R^m$ defined by 
\begin{equation}\label{eq:Fhood}
F(x):= \cl \left(F_\mathcal{U}(x)- \R^m_+\right)^c \cap \left[\ell b, ub(x)\right]_C.
\end{equation}

\begin{Remark}
    It is worth mentioning that the map  $ub(\cdot)$ already plays an important role in the context of robust multiobjective optimization when choosing the point-based minmax approach. There, more concretely, the map $\widetilde{ ub}(\cdot):=ub(\cdot)-e$ completely defines the associated robust counterpart problem, see \cite{KuroiwaLee2012}. In the setup of this paper however, $ub(\cdot)$ is used only as an upper bound map for constructing an appropriate compact-valued set-valued mapping.
\end{Remark}

Note that, due to the continuity of $f$ and the compactness of $\mathcal{U},$ the map $ub$ is well defined. Consequently, the mapping $F$ in \eqref{eq:Fhood} is  well defined. Moreover, from the proper boundedness of $f$, the definition of $F_\mathcal{U}$ in \eqref{eq:F_U} and of $F$ in \eqref{eq:ub def}, it   follows that
\begin{equation}\label{eq:F_UsubsetyU}
\forall \; x\in \Omega: F_{\mathcal{U}}(x) \subseteq (\ell b, ub(x))_C.
\end{equation}
We can then introduce the following parametric family of multiobjective problems indexed by $p \in \N$ which will be our epigraphical reformulation as we discuss next.
\begin{equation}\label{mp}
\begin{array}{ll}
\min\limits_{x,y^1, \ldots,y^p} \; \begin{pmatrix}
y^1 \\ \vdots \\y^p
\end{pmatrix}   \tag{$\mathcal{MP}_p$} \\
  \\
\quad\textup{s.t.} \;\; \min\limits_{j \in [m]}\left\{f_j(x,u) - y^i_j\right\} \leq 0 \;\; \forall \;u \in \mathcal{U}, \; i \in [p],  \\
\quad \;\;\;\;\;\;\;\;\;  y^i -\ell b \in C, \; \forall\; i \in [p],\\ 
\quad \;\;\;\;\;\;\;\;\; x \in \Omega.
\end{array} 
\end{equation}

In the single-objective case, i.e., $m=1$, 
we have, for $p = 1$ and $C = \R_+$, that \eqref{mp} is equivalent to \eqref{eq:epi ref scalar}.  
Thus, the parametric family $\left\{\eqref{mp}\right\}_{p \in \N}$   extends \eqref{eq:epi ref scalar} to the multiobjective setting. However, as we will see throughout the remaining of the section, unlike for uncertain single-objective problems,  just considering \eqref{mp} for $p = 1$ may not be enough in the general case.

\subsection{Approximation Properties}

We start by collecting some basic properties of the auxiliary set-valued mapping $F$ from \eqref{eq:Fhood}. 

\begin{Proposition} \label{prop:basic properties F}
Let Assumption \ref{main_assumption} hold. Then for $F$ as defined  in \eqref{eq:Fhood} it holds   
\begin{enumerate}
\item \label{item:F_compact valued} $F(x)$ is nonempty and compact for all $x\in\Omega$, 
\item \label{item: F(Omega) bounded} if $ \tilde{\Omega} \subseteq \Omega$ is nonempty and $f(\tilde{\Omega} \times \mathcal{U})$ is bounded, then $F(\tilde{\Omega})$ is bounded,
\item \label{item:F closed graph} $\gph F$ is closed,
\item \label{item:FtoFUweak} $\forall\; x, x' \in \Omega: F_\mathcal{U}(x) \prec^u F_\mathcal{U}(x')  \Longleftrightarrow  F(x) \prec^l F(x'),$
\item \label{item:FtoFU} $\forall\; x, x' \in \Omega: F(x) \preceq^l F(x')  \Longrightarrow  F_\mathcal{U}(x) \preceq^u F_\mathcal{U}(x') .$

\end{enumerate}
\end{Proposition}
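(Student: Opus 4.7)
The plan is to dispatch items (i)--(iv) using the machinery already developed in Sections 2--3 (namely Lemmas \ref{lem:psiB} and \ref{lem:shiftinglemma}, Proposition \ref{prop: cl A-K equivalent}, and Theorem \ref{thm:uppertolower}), and to handle (v) via a contrapositive argument whose crux is a strict-dominance upgrade obtained from the intermediate value construction inside the proof of Lemma \ref{lem:shiftinglemma}.

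For (i), nonemptiness of $F(x)$ I would obtain by selecting $\bar y \in F_\mathcal{U}(x)$ maximizing $\sum_{j=1}^m y_j$ (which exists by compactness of $F_\mathcal{U}(x)$); then no $y' \in F_\mathcal{U}(x)$ strictly dominates $\bar y$ componentwise, which translates to $\psi_{F_\mathcal{U}(x)}(\bar y) \geq 0$ and hence $\bar y \in \cl(F_\mathcal{U}(x) - \R^m_+)^c$ by Lemma \ref{lem:psiB}\ref{item:representationclB-Kc}, while $\bar y \in [\ell b, ub(x)]_C$ follows from \eqref{eq:F_UsubsetyU}. Compactness of $F(x)$ then holds because it is the intersection of two closed sets contained in the compact box $\{y \mid \ell b \leq y \leq ub(x)\}$. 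Item (ii) is immediate: under the boundedness hypothesis, $ub(\cdot)$ is bounded above on $\tilde \Omega$ by some $M \in \R^m$, which yields $F(\tilde \Omega) \subseteq \{y \mid \ell b \leq y \leq M\}$.

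For (iii), given a sequence $(x_k, y_k) \to (x, y)$ with $y_k \in F(x_k)$, closedness of $C$ yields $y - \ell b \in C$, and continuity of $ub(\cdot)$ (by continuity of $f$ and compactness of $\mathcal{U}$ via the maximum theorem) gives $y \leq ub(x)$. The key ingredient is joint continuity of the map $(x, y) \mapsto \psi_{F_\mathcal{U}(x)}(y) = \min_{u \in \mathcal{U}} \max_{j \in [m]}\{y_j - f_j(x, u)\}$, again by continuity of $f$ and compactness of $\mathcal{U}$, which lets me pass $\psi_{F_\mathcal{U}(x_k)}(y_k) \geq 0$ to the limit and conclude $y \in \cl(F_\mathcal{U}(x) - \R^m_+)^c$ by Lemma \ref{lem:psiB}\ref{item:representationclB-Kc}. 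Item (iv) is an immediate application of Theorem \ref{thm:uppertolower} with $A := F_\mathcal{U}(x)$ and $B := F_\mathcal{U}(x')$, whose strict-enclosure hypotheses are exactly \eqref{eq:F_UsubsetyU}.

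For (v) I would argue by contrapositive. Assume $F_\mathcal{U}(x) \npreceq^u F_\mathcal{U}(x')$ and pick $y \in F_\mathcal{U}(x)$ with $y \notin F_\mathcal{U}(x') - \R^m_+$; by Lemma \ref{lem:psiB}\ref{item:representationB-K} this gives $\psi_{F_\mathcal{U}(x')}(y) > 0$, and by \eqref{eq:F_UsubsetyU} also $y \in \cl(F_\mathcal{U}(x') - \R^m_+)^c \cap (\{\ell b\} + \Int C)$. Replaying the intermediate value step in the proof of Lemma \ref{lem:shiftinglemma} with $v := y - \ell b$ produces some $\bar t \in (0, 1)$, whose strict positivity is forced by $\psi_{F_\mathcal{U}(x')}(y) > 0$, such that $\bar y' := y - \bar t v \in F(x')$. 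The heart of the argument, and the main obstacle, is to upgrade the non-strict bound $\bar y' \leq y$ inherited from Lemma \ref{lem:shiftinglemma} to the strict componentwise inequality $\bar y' < y$; this follows from $\bar t > 0$ together with $v \in \Int C \subseteq \Int \R^m_+$ (the inclusion being automatic from $C \subseteq (\Int \R^m_+) \cup \{0\}$ and pointedness of $C$). Consequently, any hypothetical $z \in F(x)$ with $z \leq \bar y'$ would satisfy $z < y$, so $z \in \{y\} - \Int \R^m_+ \subseteq F_\mathcal{U}(x) - \Int \R^m_+$, contradicting $z \in F(x) \subseteq (F_\mathcal{U}(x) - \Int \R^m_+)^c$ via Proposition \ref{prop: cl A-K equivalent}. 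Hence $\bar y' \notin F(x) + \R^m_+$, witnessing $F(x) \npreceq^l F(x')$.
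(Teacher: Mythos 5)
Your proposal is correct, and for items (i), (ii) and (iv) it follows the paper's route (item (iv) is, in both cases, just Theorem \ref{thm:uppertolower} applied with $A=F_{\mathcal{U}}(x)$, $B=F_{\mathcal{U}}(x')$ via \eqref{eq:F_UsubsetyU}; for (i) you spell out a nonemptiness argument the paper leaves as ``immediate''). The two places where you deviate are (iii) and (v). For (iii) the paper fixes $u\in\mathcal{U}$ and extracts a subsequence along which one fixed component satisfies $f_i(x^k,u)\le y^k_i$, whereas you pass to the limit in the inequality $\psi_{F_{\mathcal{U}}(x^k)}(y^k)\ge 0$ using joint continuity of $(x,y)\mapsto\min_{u\in\mathcal{U}}\max_{j\in[m]}\{y_j-f_j(x,u)\}$; this is a clean variant that reuses the scalarization already central to Section 3, at the cost of one extra (standard, Berge-type) continuity claim. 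For (v) the paper argues by direct contradiction: since $y$ lies in the \emph{open} set $(\{\ell b\}+\Int C)\cap\left(F_{\mathcal{U}}(x')-\R^m_+\right)^c$, it shifts to $y-te$ for small $t>0$, applies Lemma \ref{lem:shiftinglemma} as a black box to get $y'\in F(x')$ with $y'\le y-te$, and then any $z\in F(x)$ with $z\le y'$ lands in $F_{\mathcal{U}}(x)-\Int\R^m_+$, contradicting Proposition \ref{prop: cl A-K equivalent}. You instead argue by contrapositive and re-open the proof of Lemma \ref{lem:shiftinglemma}, using $\psi_{F_{\mathcal{U}}(x')}(y)>0$ (from Lemma \ref{lem:psiB}\ref{item:representationB-K}) to force $\bar t>0$ in the intermediate-value step and hence the strict inequality $\bar y'<y$ (valid, since $v=y-\ell b\in\Int C\subseteq\Int\R^m_+$). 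Both arguments rest on the same ingredients; the paper's $te$-shift obtains the needed strict margin slightly more cheaply, while your version makes explicit a strict form of the shifting lemma that could be of independent use, but requires modifying that lemma's proof rather than citing its statement.
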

\begin{proof} Statement 
\ref{item:F_compact valued} follows immediately from the definition. 

For 
\ref{item: F(Omega) bounded}, if $f(\tilde{\Omega} \times \mathcal{U})$ is bounded, we can  find in particular a vector $\bar{z} \in \R^m$ such that $f(\tilde{\Omega} \times \mathcal{U}) \subseteq \{\bar{z} \}- \R^m_+.$ By taking into account the definition of $F$ and the definition of $ub(x)$ in \eqref{eq:ub def}, it then follows that 
$$F(\tilde{\Omega})\subseteq \left(\{\ell b\} + \R^m_+\right)\cap \left(\{\bar{z} + e \}- \R^m_+\right),$$ which is a bounded set. Thus, $F(\tilde{\Omega})$ is bounded.

  In order to show \ref{item:F closed graph}, i.e.\ the closedness of $\gph F,$ we fix a sequence $\{(x^k, y^k)\}_{k \in \N} \subseteq \gph F$ such that $\lim_{k \to \infty} (x^k, y^k) = (\bar{x}, \bar{y}).$ Then, by the definition of $F$, we have for all $ k \in \N$ that $ y^k \in [\ell b, ub(x^k)]_C.$ By Proposition \ref{prop: cl A-K equivalent}, we further have for all $ k \in \N$ that $y^k\not\in F_{\mathcal{U}}(x^k)- \Int \R^m_+$ and thus for all $ u \in \mathcal{U}$ that $   y^k\nless f(x^k, u) $. Now, since $\Omega$ is closed, in particular $\bar{x} \in \Omega.$ Moreover, because of Berge's Maximum Theorem \cite[Theorem 17.31]{Aliprantis2006}, we  have that $ub$ is continuous. This, combined with the closedness of $C$, allows us to take the limit in $ y^k \in [\ell b, ub(x^k)]_C$ for $k \to \infty$ to obtain 
\begin{equation}\label{eq:ybar in box}
    \bar{y} \in [\ell b, ub(\bar{x})]_C.
\end{equation}
Now we consider $y^k \nless f(x^k, u)$ for all $ u \in \mathcal{U}$. First, fix some $u\in\mathcal{U}$. We have for all $k\in\N$ that there exists at least one $i\in[m]$  with $f_i(x^k,u)\leq y_i^k$. By taking suitable subsequences and the continuity of $f$ into account, we obtain for at least one $i\in[m]$ that $f_i(\bar x,u) \leq \bar y_i$. As a consequence, for all $u\in \mathcal{U}$, we have $\bar y\not\in F_{\mathcal{U}}(\bar x)-\Int \R^m_+$. 
According to Proposition \ref{prop: cl A-K equivalent}, this is now equivalent to $\bar{y} \in \cl(F_{\mathcal{U}}(\bar x) - \R_+^m)^c.$ From this and  \eqref{eq:ybar in box}, it then follows that $\bar{y} \in F(\bar{x}),$ which shows the closedness of $F$. 

To see \ref{item:FtoFUweak} note that from the continuity of $f$ and the compactness of $\mathcal{U},$ it is clear that $F_\mathcal{U}$ is compact-valued. The statement is then an immediate consequence of \eqref{eq:F_UsubsetyU}  and Theorem \ref{thm:uppertolower}. 
 
 In order to see \ref{item:FtoFU}, fix $x, x' \in \Omega$ such that $F(x) \preceq^l F(x')$. Suppose now that $ F_{\mathcal{U}}(x) \npreceq^u F_{\mathcal{U}}(x').$ Then, there exists $y \in F_{\mathcal{U}}(x) \cap \left(F_{\mathcal{U}}(x') - \R^m_+\right)^c.$ Therefore, according to \eqref{eq:F_UsubsetyU}, we have that $y \in \left(\{\ell b\} + \Int C\right) \cap \left(F_{\mathcal{U}}(x') - \R^m_+\right)^c,$ an open set. Consequently, we can find some small $t > 0$ such that $y - t e \in  \left(\{\ell b \}+ C\right) \cap \cl\left(F_{\mathcal{U}}(x') - \R^m_+\right)^c.$ It follows now from Lemma \ref{lem:shiftinglemma} the existence of $y' \in F(x')$ such that $y' \leq y - te.$ Moreover, since $F(x) \preceq^l F(x'),$ there exists $z \in F(x)$ such that $z \leq y'.$ Thus, in particular, $z \leq y - te.$ However, this would imply that $$z \in F_{\mathcal{U}}(x) - \{t e \}- \R^m_+ \subseteq F_{\mathcal{U}}(x) - \Int \R^m_+,$$  which, taking into account Proposition \ref{prop: cl A-K equivalent},  contradicts the fact that $z \in F(x)$. The statement follows.
\end{proof}

The following proposition shows an important connection between \eqref{sp} (from now on with $F$ given by \eqref{eq:Fhood}) and the robust weakly efficient solutions of \eqref{ump}.

\begin{Proposition}\label{prop:spu=spl}
 Let Assumption \ref{main_assumption} hold and $F$ be   given by \eqref{eq:Fhood}. Then it holds that  $$\wargmin \eqref{ump} = \wargmin^l \eqref{sp}.$$
\end{Proposition}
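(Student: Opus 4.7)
The plan is to show this equality directly by chaining definitions with the key equivalence already established in Proposition \ref{prop:basic properties F}\ref{item:FtoFUweak}. Indeed, the content of the proposition is essentially a restatement of that equivalence at the level of solution sets, so no further technical work beyond unravelling the relevant definitions should be required.

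First, I would recall that by Definition \ref{def:robustweaklyeff}, $\bar{x}\in\wargmin\eqref{ump}$ if and only if $\bar{x}\in\Omega$ and there is no $x\in\Omega$ with $F_\mathcal{U}(x)\prec^u F_\mathcal{U}(\bar{x})$. Likewise, by Definition \ref{def:sp solution concept} applied to the set-valued mapping $F$ from \eqref{eq:Fhood}, $\bar{x}\in\wargmin^l\eqref{sp}$ if and only if $\bar{x}\in\Omega$ and there is no $x\in\Omega$ with $F(x)\prec^l F(\bar{x})$. Note that both solution concepts are formulated over the same feasible set $\Omega$, so the equivalence of the two non-existence statements is what needs to be established.

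Next, I would invoke Proposition \ref{prop:basic properties F}\ref{item:FtoFUweak}, which asserts exactly that for all $x,x'\in\Omega$,
\begin{equation*}
F_\mathcal{U}(x)\prec^u F_\mathcal{U}(x') \iff F(x)\prec^l F(x').
\end{equation*}
Taking the contrapositive of this biconditional gives $F_\mathcal{U}(x)\nprec^u F_\mathcal{U}(x') \iff F(x)\nprec^l F(x')$ for all $x,x'\in\Omega$. Consequently, the universal quantification over $x\in\Omega$ with $x'=\bar{x}$ yields the equivalence of the two non-existence conditions, and thus $\bar{x}\in\wargmin\eqref{ump}$ iff $\bar{x}\in\wargmin^l\eqref{sp}$.

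There is essentially no obstacle here; the hard work has already been done in establishing Proposition \ref{prop:basic properties F}\ref{item:FtoFUweak}, which itself rests on the reformulation Theorem \ref{thm:uppertolower} together with the proper lower boundedness assumption that ensures \eqref{eq:F_UsubsetyU}. The proof of the present proposition is therefore just a short unfolding of definitions combined with a single application of that equivalence.
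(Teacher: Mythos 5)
Your proposal is correct and matches the paper's proof, which likewise deduces the equality directly (and immediately) from Proposition \ref{prop:basic properties F}\ref{item:FtoFUweak}; you have simply spelled out the unfolding of Definitions \ref{def:robustweaklyeff} and \ref{def:sp solution concept} that the paper leaves implicit.
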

\begin{proof}
It is an immediate consequence of Proposition \ref{prop:basic properties F} \ref{item:FtoFUweak}.
\end{proof}

Proposition \ref{prop:spu=spl} opens the door to apply the results from Theorem \ref{thm:splmainresults} to our context. The first step in this direction is the following lemma.
\begin{Lemma}\label{lem:clFu-Kc}
Let Assumption \ref{main_assumption} hold. Then,
\begin{displaymath} 
\forall\; x \in \Omega: \cl \left(F_\mathcal{U}(x)- \R^m_+\right)^c = \left\{y \in \R^m \ \Bigg|\    \min\limits_{j \in [m]}\left\{f_j(x,u) - y_j\right\} \leq 0 \;\; \forall \;u \in \mathcal{U}\right\}.
\end{displaymath}
\end{Lemma}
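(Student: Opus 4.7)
The plan is short: I would reduce the statement to an elementary characterization of the complement of $F_\mathcal{U}(x)-\Int\R^m_+$, using Proposition \ref{prop: cl A-K equivalent} to get rid of the closure operator on the left-hand side.

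First I would observe that Assumption \ref{basic_setup} and the definition of $F_\mathcal{U}$ in \eqref{eq:F_U} imply, for each fixed $x\in\Omega$, that $F_\mathcal{U}(x)$ is the continuous image of the compact set $\mathcal{U}$ under $f(x,\cdot)$, hence nonempty and compact. This puts us exactly in the setting of Proposition \ref{prop: cl A-K equivalent} with $A=F_\mathcal{U}(x)$, which yields
\begin{equation*}
\cl\left(F_\mathcal{U}(x)-\R^m_+\right)^c \;=\; \left(F_\mathcal{U}(x)-\Int\R^m_+\right)^c.
\end{equation*}

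Next I would unpack the right-hand side. A point $y\in\R^m$ lies in $F_\mathcal{U}(x)-\Int\R^m_+$ if and only if there exists $u\in\mathcal{U}$ and $k\in\Int\R^m_+$ with $y=f(x,u)-k$, equivalently, there exists $u\in\mathcal{U}$ such that $f_j(x,u)-y_j>0$ for every $j\in[m]$, i.e.\ $\min_{j\in[m]}\{f_j(x,u)-y_j\}>0$. Negating this statement gives that $y\in\left(F_\mathcal{U}(x)-\Int\R^m_+\right)^c$ if and only if
\begin{equation*}
\forall\; u\in\mathcal{U}:\quad \min_{j\in[m]}\{f_j(x,u)-y_j\}\leq 0,
\end{equation*}
which is exactly the condition defining the right-hand side of the claim. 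Combining with the previous displayed equality closes the argument.

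I do not expect any obstacle here; the result is essentially a bookkeeping identity. The only substantive ingredient is Proposition \ref{prop: cl A-K equivalent}, whose applicability hinges on the compactness of $F_\mathcal{U}(x)$, and that is immediate from Assumption \ref{basic_setup}.
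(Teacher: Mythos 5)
Your proof is correct, but it takes a different route from the paper's. The paper stays inside the scalarization framework of Section 3: it invokes Lemma \ref{lem:psiB} \ref{item:representationclB-Kc} to write $\cl\left(F_\mathcal{U}(x)-\R^m_+\right)^c$ as the superlevel set $\{y\in\R^m \mid \psi_{F_\mathcal{U}(x)}(y)\geq 0\}$ of the Tammer--Weidner-type functional, then rewrites the inner minimum over $F_\mathcal{U}(x)$ as a minimum over $\mathcal{U}$ (using the definition \eqref{eq:F_U}) and passes the quantifier through, arriving at the semi-infinite description via $-\psi\left(y-f(x,u)\right)=\min_{j\in[m]}\{f_j(x,u)-y_j\}$. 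You instead bypass the functional entirely: you use Proposition \ref{prop: cl A-K equivalent} to replace the closed complement by $\left(F_\mathcal{U}(x)-\Int\R^m_+\right)^c$ (legitimate, since $F_\mathcal{U}(x)$ is the continuous image of the compact set $\mathcal{U}$, hence nonempty and compact) and then do an elementary element-wise unpacking and negation. Both arguments are short and sound; yours is more elementary and self-contained at this point, while the paper's keeps the bookkeeping uniform with the $\psi_A$-based machinery used throughout Lemmas \ref{lem:psiB}--\ref{lem:shiftinglemma} and Theorem \ref{thm:uppertolower}. No gap to report.
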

\begin{proof} Fix $x \in \Omega.$ Then, taking into account that $F_\mathcal{U}(x)$ is compact, Lemma \ref{lem:psiB}  \ref{item:representationclB-Kc}, \eqref{eq:psiB} and the definition of $F_{\mathcal{U}}$ in  \eqref{eq:F_U}, we deduce
\begin{eqnarray*}
\cl \left(F_\mathcal{U}(x)- \R^m_+\right)^c & = & \left\{y \in \R^m \mid - \psi_{F_{\mathcal{U}}(x)}(y) \leq 0 \right\}\\
                            & = & \left\{y \in \R^m \mid - \min_{y' \in F_{\mathcal{U}}(x)}\psi (y -y') \leq 0  \right\}\\
                            & = & \left\{y \in \R^m \mid - \min_{u \in \mathcal{U}}\psi (y -f(x,u)) \leq 0  \right\}\\
                            & = & \left\{y \in \R^m \mid \max\limits_{u \in \mathcal{U}} -\psi\left( y - f(x,u)\right) \leq 0 \right\}\\
                            & = & \left\{y \in \R^m \mid  -\psi\left( y - f(x,u)\right) \leq 0 \;\; \forall \;u \in \mathcal{U}\right\},
\end{eqnarray*} as desired.
\end{proof}

For our analysis in the rest of the section, we will follow the notation employed in Theorem \ref{thm:splmainresults} for the projections of the sets $\varepsilon \textrm{-} \wargmin \eqref{mp}$ onto $\R^n,$ i.e., 
$$\varepsilon\textrm{-}\wargmin_x\eqref{mp}:= \{x' \in \R^n \mid \exists\; y^1,\ldots y^p \in F(x') : (x',y^1,\ldots, y^p) \in \varepsilon\textrm{-}\wargmin \eqref{mp}\}.$$ The next lemma shows that the $\varepsilon$-weakly efficient solutions (projected onto $\R^n$) of the multiobjective optimization problems \eqref{mp} (our epigraphical reformulation) and \eqref{vpp} (from the vectorization scheme) coincide. 

\begin{Lemma}\label{lem:mop=mop'}
Let Assumption \ref{main_assumption} hold. For $p \in \N,$ consider the multiobjective optimization  problem \eqref{vpp} associated to the set-valued mapping $F$ given by \eqref{eq:Fhood}. Then,
$$ \forall\; \varepsilon \geq 0: \varepsilon \textrm{-} \wargmin_x\eqref{mp} = \varepsilon \textrm{-} \wargmin_x\eqref{vpp}.$$
\end{Lemma}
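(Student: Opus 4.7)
The plan is to compare the feasible sets of \eqref{mp} and \eqref{vpp}, where the latter is taken with $F$ as in \eqref{eq:Fhood}. Combining Lemma \ref{lem:clFu-Kc} with \eqref{eq:Fhood}, the constraint $y^i\in F(x)$ appearing in \eqref{vpp} is equivalent to the three conditions (a) $\min_{j\in[m]}\{f_j(x,u)-y^i_j\}\leq 0$ for all $u\in\mathcal{U}$, (b) $y^i-\ell b\in C$, and (c) $y^i\leq ub(x)$, while \eqref{mp} only imposes (a) and (b). Hence the feasible set of \eqref{vpp} is contained in that of \eqref{mp}, the objective vectors coincide, and only the extra upper-bound (c) distinguishes the two problems.

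For the inclusion $\varepsilon\textrm{-}\wargmin_x\eqref{mp}\subseteq \varepsilon\textrm{-}\wargmin_x\eqref{vpp}$, take $\bar x\in \varepsilon\textrm{-}\wargmin_x\eqref{mp}$ with witnesses $\bar y^1,\ldots,\bar y^p\in F(\bar x)$ (the condition $\bar y^i\in F(\bar x)$ is built into the definition of the projection). Then $(\bar x,\bar y^1,\ldots,\bar y^p)$ is automatically feasible for \eqref{vpp}, and any $(x',y'^1,\ldots,y'^p)$ feasible for \eqref{vpp} that $\varepsilon$-dominates it is a fortiori feasible for \eqref{mp}, contradicting the $\varepsilon$-weak efficiency of the witness in \eqref{mp}.

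The reverse inclusion is where Lemma \ref{lem:shiftinglemma} is needed. Let $\bar x\in \varepsilon\textrm{-}\wargmin_x\eqref{vpp}$ with witnesses $\bar y^1,\ldots,\bar y^p\in F(\bar x)$; the tuple is feasible for \eqref{mp}. Suppose for contradiction that some $(x',y'^1,\ldots,y'^p)$ feasible for \eqref{mp} $\varepsilon$-dominates it, so the $y'^i$'s satisfy (a) and (b) but perhaps not (c). Applying Lemma \ref{lem:shiftinglemma} to $A=F_\mathcal{U}(x')$, which lies in $(\ell b,ub(x'))_C$ by \eqref{eq:F_UsubsetyU}, produces $\tilde y'^i\in F(x')$ with $\tilde y'^i\leq y'^i<\bar y^i-\varepsilon e$. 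Then $(x',\tilde y'^1,\ldots,\tilde y'^p)$ is feasible for \eqref{vpp} and $\varepsilon$-dominates the witness in \eqref{vpp}, a contradiction.

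No serious obstacle is anticipated: the forward inclusion is immediate from the containment of feasible sets, while the reverse inclusion rests entirely on the monotone projection supplied by Lemma \ref{lem:shiftinglemma}, which is designed precisely to compensate for the missing upper-bound condition (c) in the epigraphical reformulation \eqref{mp}.
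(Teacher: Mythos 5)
Your proof is correct and follows essentially the same route as the paper: you use Lemma \ref{lem:clFu-Kc} to identify the feasible set of \eqref{vpp} (with $F$ from \eqref{eq:Fhood}) as the feasible set of \eqref{mp} augmented by the upper bound $y^i\leq ub(x)$, and you use Lemma \ref{lem:shiftinglemma} (via \eqref{eq:F_UsubsetyU}) to push a hypothetical $\varepsilon$-dominator of \eqref{mp} down into $F(x')$, which is exactly the paper's mechanism for the direction $\varepsilon\textrm{-}\wargmin_x\eqref{vpp}\subseteq\varepsilon\textrm{-}\wargmin_x\eqref{mp}$. The one place where you diverge is the forward inclusion: you dispose of it immediately by invoking the displayed definition of $\varepsilon\textrm{-}\wargmin_x\eqref{mp}$, which literally requires witnesses $\bar y^1,\ldots,\bar y^p\in F(\bar x)$, so that they are automatically feasible for \eqref{vpp} and inherit $\varepsilon$-weak efficiency from the larger feasible set. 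That is legitimate under the definition as written. Be aware, though, that the paper's own proof does \emph{not} use this membership: it only assumes the witnesses are $\varepsilon$-weakly efficient feasible points of \eqref{mp} (which need not satisfy $\bar y^i\leq ub(\bar x)$) and applies Lemma \ref{lem:shiftinglemma} a second time to replace them by shifted witnesses $\bar z^i\in F(\bar x)$ with $\bar z^i\leq\bar y^i$, checking that these remain $\varepsilon$-weakly efficient. So the paper's argument also covers the weaker ``plain projection'' reading of $\varepsilon\textrm{-}\wargmin_x\eqref{mp}$; if that reading is the intended one, your forward direction would need this same extra shifting step, and nothing else in your argument changes.
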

\begin{proof} Because of the particular structure of the set-valued mapping $F$, we can apply Lemma \ref{lem:clFu-Kc}  to obtain that \eqref{vpp} can be rewritten as
\begin{equation}\label{mp'}
\begin{array}{ll}
\min\limits_{x,y^1, \ldots,y^p} \; \begin{pmatrix}
y^1 \\ \vdots \\y^p
\end{pmatrix}   \tag{$\mathcal{MP}^{'}_p$} \\
  \\
\quad\textup{s.t.} \;\; \min\limits_{j \in [m]}\left\{f_j(x,u) - y^i_j\right\} \leq 0 \;\; \forall \;u \in \mathcal{U}, \; i \in [p],  \\
\quad \;\;\;\;\;\;\;\;\;  y^i \in \left[\ell b, ub(x) \right]_C, \; \forall\; i \in [p],\\ 
\quad \;\;\;\;\;\;\;\;\; x \in \Omega.
\end{array} 
\end{equation} 
The only difference to \eqref{mp} are the additional constraints $y^i\leq ub(x)$ for $i\in[p]$. 

Now, let $\bar{x} \in \varepsilon \textrm{-}\wargmin_x\eqref{mp},$ and choose $\left\{\bar{y}^1,\ldots,\bar{y}^p\right\} \subseteq \R^m$ such that 
\begin{equation}\label{eq:wargminmop}
\left(\bar{x},\bar{y}^1,\ldots,\bar{y}^p\right) \in \varepsilon \textrm{-}\wargmin\eqref{mp}.
\end{equation} Taking into account Lemma \ref{lem:clFu-Kc},   the feasibility of $\left(\bar{x},\bar{y}^1,\ldots,\bar{y}^p\right)$ for \eqref{mp} is equivalent to 
$$\forall \; i \in [p]: \bar{y}^i \in \cl \left(F_\mathcal{U}(x)- \R^m_+\right)^c\cap \left(\{\ell b\}+C\right).$$ Applying now Lemma \ref{lem:shiftinglemma}, we obtain for all $i\in[p]$ that $\bar y^i=\bar z^i+\bar k^i$ with $\bar k^i\in\R^m_+$ and
$$\bar z^i  \in \cl \left(F_\mathcal{U}(\bar x)- \R^m_+\right)^c\cap [\ell b,ub(\bar x)]_C.$$
In particular,  for all $i\in[p]$  we have $\bar z^i\leq ub(\bar x)$ and $\left(\bar{x},\bar{z}^1,\ldots,\bar{z}^p\right)$ is feasible for \eqref{mp'}.
It suffices now to show that $\left(\bar{x},\bar{z}^1,\ldots,\bar{z}^p\right) \in \varepsilon \textrm{-}\wargmin \eqref{mp'}.$ Indeed, otherwise we can find $\left(x,y^1,\ldots,y^p\right)$ feasible for \eqref{mp'} (and thus feasible for \eqref{mp}) such that 
$$\forall \, i\in [p]: y^i < \bar{z}^i- \varepsilon e\leq \bar y^i- \varepsilon e.$$ This would then contradict \eqref{eq:wargminmop}.

Suppose now that $\bar{x} \in \varepsilon \textrm{-} \wargmin_x\eqref{mp'},$ and take elements $\bar{y}^1,\ldots,\bar{y} ^p\in \R^m$ such that 
\begin{equation}\label{eq:wargminmop'}
\left(\bar{x},\bar{y}^1,\ldots,\bar{y}^p\right) \in \varepsilon \textrm{-}\wargmin\eqref{mp'}.
\end{equation} We claim that $\left(\bar{x},\bar{y}^1,\ldots,\bar{y}^p\right) \in \varepsilon \textrm{-}\wargmin \eqref{mp}.$ Indeed, otherwise we can find $\left(x,y^1,\ldots,y^p\right)$ feasible for \eqref{mp} such that 
\begin{equation}\label{eq:yilessybari}
\forall \, i\in [p]: y^i < \bar{y}^i-\varepsilon e.
\end{equation} Similarly to above, we can apply Lemma \ref{lem:clFu-Kc} to obtain 
$$\forall \; i \in [p]: y^i \in \cl \left(F_\mathcal{U}(x)- \R^m_+\right)^c\cap \left(\ell b+C\right).$$ Applying next Lemma \ref{lem:shiftinglemma}, we get for $i\in[p]$, as above, $z^i\in\R^m$  with $ z^i\leq y^i$ and $z^i\leq ub(\bar x)$  and 
$  z^i \in \cl \left(F_\mathcal{U}(x)- \R^m_+\right)^c\cap \left(\ell b+C\right)$.
This would then imply that $\left(x,z^1,\ldots,z^p\right)$  is feasible for \eqref{mp'}. This, together with \eqref{eq:yilessybari} and $z^i\leq y^i$,  implies for all $i\in[p]$ that $z^i<\bar y^i-\varepsilon e$, which contradicts  \eqref{eq:wargminmop'}.
\end{proof}

We can now establish the main results of this section for our epigraphical reformulation. They are applications of Theorems \ref{thm:splmainresults} and  \ref{thm:sufficient vectorization}, respectively, which have been made possible by the preceding results from this section.  
\begin{Theorem}\label{thm:main approximation property}
 Let Assumption \ref{main_assumption} hold. Then,
\begin{enumerate}
\item \label{item:monot solution sets} For every $\varepsilon \geq 0$ and $p_1, p_2 \in \N$ we have: 
$$p_1 \leq p_2 \Longrightarrow \varepsilon \textrm{-} \wargmin_x \;(\mathcal{MP}_{p_1}) \subseteq \varepsilon \textrm{-} \wargmin_x \;(\mathcal{MP}_{p_2}).$$ 
\item \label{item:approximation property} The following approximation property is true:
\begin{displaymath}
\bigcup\limits_{p \in \N} \wargmin_x \eqref{mp} \subseteq \wargmin \eqref{ump} = \bigcap\limits_{\varepsilon >0}\bigcup\limits_{p\in \N} \varepsilon \textrm{-}\wargmin_x  \eqref{mp}.
\end{displaymath} Furthermore, if $f\left(\wargmin \eqref{ump} \times \mathcal{U}\right)$ is bounded, then for each $\varepsilon>0$ we can find $p \in \N$ such that
\begin{displaymath}
\wargmin_x \eqref{mp} \subseteq \wargmin \eqref{ump} \subseteq \varepsilon \textrm{-} \wargmin_x \eqref{mp}.
\end{displaymath}
\end{enumerate}
\end{Theorem}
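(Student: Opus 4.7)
The plan is to reduce everything to Theorem \ref{thm:splmainresults} applied to the set-valued mapping $F$ given in \eqref{eq:Fhood}, exploiting the three bridges already established in this section: Proposition \ref{prop:basic properties F} (which verifies that $F$ is compact-valued with closed graph and translates boundedness of images of $f$ into boundedness of $F$), Proposition \ref{prop:spu=spl} (which identifies $\wargmin \eqref{ump}$ with $\wargmin^l \eqref{sp}$), and Lemma \ref{lem:mop=mop'} (which identifies $\varepsilon\textrm{-}\wargmin_x \eqref{mp}$ with $\varepsilon\textrm{-}\wargmin_x \eqref{vpp}$ for every $\varepsilon \geq 0$ and $p \in \N$).

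For part \ref{item:monot solution sets}, I would simply invoke Theorem \ref{thm:splmainresults}\ref{item:monotwargminvpp} for the mapping $F$, which is compact-valued by Proposition \ref{prop:basic properties F}\ref{item:F_compact valued}. This gives the nested inclusion $\varepsilon\textrm{-}\wargmin_x (\mathcal{VP}_{p_1}) \subseteq \varepsilon\textrm{-}\wargmin_x (\mathcal{VP}_{p_2})$ for $p_1 \leq p_2$, and Lemma \ref{lem:mop=mop'} then transports this to the corresponding inclusion between $\varepsilon\textrm{-}\wargmin_x(\mathcal{MP}_{p_1})$ and $\varepsilon\textrm{-}\wargmin_x(\mathcal{MP}_{p_2})$.

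For part \ref{item:approximation property}, I would apply Theorem \ref{thm:splmainresults}\ref{item:approxpropertyvpp} to $F$: this yields
\begin{equation*}
\bigcup_{p \in \N} \wargmin_x \eqref{vpp} \subseteq \wargmin^l \eqref{sp} = \bigcap_{\varepsilon >0}\bigcup_{p\in \N} \varepsilon\textrm{-}\wargmin_x \eqref{vpp}.
\end{equation*}
Substituting $\wargmin^l \eqref{sp} = \wargmin \eqref{ump}$ via Proposition \ref{prop:spu=spl} and replacing each $\varepsilon\textrm{-}\wargmin_x \eqref{vpp}$ by $\varepsilon\textrm{-}\wargmin_x \eqref{mp}$ via Lemma \ref{lem:mop=mop'} (with $\varepsilon = 0$ for the leftmost union), gives the first displayed approximation property.

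For the final sharper statement, I need to check the hypothesis of the second half of Theorem \ref{thm:splmainresults}\ref{item:approxpropertyvpp}, namely that $F(\wargmin^l \eqref{sp})$ is bounded. Given that $f(\wargmin \eqref{ump} \times \mathcal{U})$ is bounded and that $\wargmin \eqref{ump} = \wargmin^l \eqref{sp}$, Proposition \ref{prop:basic properties F}\ref{item: F(Omega) bounded} (applied with $\tilde{\Omega} := \wargmin^l \eqref{sp}$) yields boundedness of $F(\tilde{\Omega})$. Hence for each $\varepsilon > 0$ there exists $p \in \N$ with $\wargmin_x \eqref{vpp} \subseteq \wargmin^l \eqref{sp} \subseteq \varepsilon\textrm{-}\wargmin_x \eqref{vpp}$, and one more application of Proposition \ref{prop:spu=spl} and Lemma \ref{lem:mop=mop'} gives the claim. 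The main subtlety I expect is simply careful bookkeeping of the identifications between the three problems \eqref{mp}, \eqref{vpp}, and \eqref{sp}; there is no independent hard analytic step, as all the non-trivial work was carried out in Section \ref{section:vectorization} and in Proposition \ref{prop:basic properties F} and Lemma \ref{lem:mop=mop'}.
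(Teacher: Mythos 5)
Your proposal is correct and follows essentially the same route as the paper's own proof: all three claims are obtained by applying Theorem \ref{thm:splmainresults} to the mapping $F$ from \eqref{eq:Fhood} (compact-valued by Proposition \ref{prop:basic properties F}\ref{item:F_compact valued}), transporting the conclusions via Lemma \ref{lem:mop=mop'} and Proposition \ref{prop:spu=spl}, and using Proposition \ref{prop:basic properties F}\ref{item: F(Omega) bounded} with $\tilde{\Omega}=\wargmin\eqref{ump}$ to verify the boundedness hypothesis for the sharper statement. No gaps.
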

\begin{proof} Statement $(i)$  follows directly from combining  Lemma \ref{lem:mop=mop'} and  Theorem \ref{thm:splmainresults} \ref{item:monotwargminvpp}.
%
%
The first part of statement $(ii)$ follows by combining Lemma \ref{lem:mop=mop'}, Theorem \ref{thm:splmainresults} \ref{item:approxpropertyvpp} and Proposition \ref{prop:spu=spl}.
%
The second part of $(ii)$ follows by Proposition \ref{prop:basic properties F} \ref{item: F(Omega) bounded},  Theorem \ref{thm:splmainresults} \ref{item:approxpropertyvpp}, Proposition \ref{prop:spu=spl} and Lemma \ref{lem:mop=mop'}.
%
\end{proof}

\begin{Remark}
Theorem \ref{thm:main approximation property} \ref{item:approximation property} together with the first inclusion in Theorem \ref{thm:main approximation property} \ref{item:monot solution sets} shows that the sets $\left\{\wargmin_x \eqref{mp}\right\}_{p \in \N}$ form a parametric family of inner approximations of $\wargmin \eqref{ump}$. The quality of the approximation improves for higher values of $p.$ On the other hand, the equality in the right hand side of Theorem \ref{thm:main approximation property} \ref{item:approximation property} indicates that, if  all $\varepsilon$-weakly efficient solutions can be computed, then every robust weakly efficient solution of \eqref{ump} can be captured if we solve \eqref{mp} for $p$ high enough. Under additional assumptions, the second part of Theorem \ref{thm:main approximation property}  \ref{item:approximation property} then establishes that this value of $p$ can be chosen independently of the robust solution being considered.
\end{Remark}

Our last result shows that it is possible to focus only on computing robust weakly efficient solutions  of \eqref{ump} that come from the epigraphical reformulations. The reason is that, image space-wise, no quality solutions are being lost by this procedure.

\begin{Theorem}
Let Assumption \ref{main_assumption} hold and suppose in addition that $\Omega$ is compact. Then,

\begin{equation*}
    \forall\; x\in \Omega, \; \exists\; \bar{x} \in \cl\left(\bigcup_{p \in \N} \wargmin_x\eqref{mp}\right): F_{\mathcal{U}}(\bar{x}) \preceq^u F_{\mathcal{U}}(x). 
\end{equation*}
\end{Theorem}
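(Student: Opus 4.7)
The plan is to reduce the claim directly to Theorem \ref{thm:sufficient vectorization} applied to the auxiliary set-valued mapping $F$ from \eqref{eq:Fhood}, and then translate between the relations $\preceq^l$ on $F$ and $\preceq^u$ on $F_{\mathcal{U}}$ (respectively between $\wargmin_x(\mathcal{VP}_p)$ and $\wargmin_x\eqref{mp}$) using the tools already developed in this section.

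First, I would verify that all hypotheses of Theorem \ref{thm:sufficient vectorization} are in place for the problem \eqref{sp} associated to $F$ given by \eqref{eq:Fhood}. The set $\Omega$ is compact by assumption. By Proposition \ref{prop:basic properties F}\ref{item:F_compact valued}, $F$ is compact-valued. Since $f$ is continuous and $\Omega \times \mathcal{U}$ is compact, the image $f(\Omega \times \mathcal{U})$ is bounded, so Proposition \ref{prop:basic properties F}\ref{item: F(Omega) bounded} yields boundedness of $F(\Omega)$. Finally, $\gph F$ is closed by Proposition \ref{prop:basic properties F}\ref{item:F closed graph}. This is mostly a bookkeeping step and should go through without difficulty.

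Next, I would apply Theorem \ref{thm:sufficient vectorization} to obtain, for every $x \in \Omega$, an element
\[
\bar{x} \in \cl\left(\bigcup_{p \in \N} \wargmin_x (\mathcal{VP}_p)\right) \quad \text{with} \quad F(\bar{x}) \preceq^l F(x).
\]
Then I would invoke Lemma \ref{lem:mop=mop'} with $\varepsilon = 0$ to conclude $\wargmin_x(\mathcal{VP}_p) = \wargmin_x \eqref{mp}$ for every $p \in \N$, so the closure set above is exactly $\cl\bigl(\bigcup_{p \in \N} \wargmin_x \eqref{mp}\bigr)$, matching the statement.

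The remaining step is to transfer the relation $F(\bar{x}) \preceq^l F(x)$ to $F_{\mathcal{U}}(\bar{x}) \preceq^u F_{\mathcal{U}}(x)$, which is precisely the content of Proposition \ref{prop:basic properties F}\ref{item:FtoFU}. This is where the asymmetry of the situation appears: we do not need the full equivalence (which only holds for the strict relations by Theorem \ref{thm:uppertolower}), but only the one implication from $\preceq^l$ on $F$ to $\preceq^u$ on $F_{\mathcal{U}}$, and that is exactly what is available. Consequently, the hardest conceptual point is simply recognizing that the combination of these three previously established results suffices; once that is seen, the proof is a short chain of citations with no additional calculation.
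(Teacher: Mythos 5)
Your proposal is correct and follows exactly the paper's own argument: verify the hypotheses of Theorem \ref{thm:sufficient vectorization} via Proposition \ref{prop:basic properties F} (compact values, bounded $F(\Omega)$ from boundedness of $f(\Omega\times\mathcal{U})$, closed graph), apply that theorem together with Lemma \ref{lem:mop=mop'} to replace $\wargmin_x(\mathcal{VP}_p)$ by $\wargmin_x\eqref{mp}$, and finish with Proposition \ref{prop:basic properties F}\ref{item:FtoFU} to pass from $F(\bar{x})\preceq^l F(x)$ to $F_{\mathcal{U}}(\bar{x})\preceq^u F_{\mathcal{U}}(x)$. No gaps; the chain of citations is the same as in the paper.
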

\begin{proof}
Because of the continuity of $f$ and the compactness of $\Omega \times \mathcal{U}$, we have in particular that $f(\Omega \times \mathcal{U})$ is bounded. Therefore, according Proposition \ref{prop:basic properties F} \ref{item: F(Omega) bounded}, $F(\Omega)$ is bounded. Moreover, from Proposition \ref{prop:basic properties F} \ref{item:F closed graph} we have that $\gph F$ is closed. Thus, we can apply Theorem \ref{thm:sufficient vectorization} and take into account Lemma \ref{lem:mop=mop'} to obtain
\begin{equation}\label{eq:suff spl aux2}
\forall\; x\in \Omega, \exists\; \bar{x} \in \cl \left(\bigcup\limits_{p \in \N} \wargmin_x  \eqref{mp}\right) : F(\bar{x})\preceq^l F(x).
\end{equation} The statement thus follows from \eqref{eq:suff spl aux2} and Proposition \ref{prop:basic properties F} \ref{item:FtoFU}.
\end{proof}

\subsection{Exactness of the Approximation}

Under Assumption \ref{main_assumption} we have that for any $p\in\N$ it holds that 
$\wargmin_x \eqref{mp} \subseteq \wargmin \eqref{ump}$.  In this part of the section we show that in certain cases there is even a finite $p$ for which equality holds. This means that in these cases all robust weakly efficient solution can be found by solving a certain multiobjective optimization problem. 
\begin{Definition}
Let Assumption \ref{main_assumption} hold. We say that \eqref{ump} satisfies the weakly efficient finite dimensional vectorization property \textup{(wFDVP)} if there exists some $p\in \N$ such that
$$\wargmin_x \eqref{mp} = \wargmin \eqref{ump}.$$ 
\end{Definition}

In the scalar case, i.e., when $m = 1$, \eqref{ump} satisfies \textup{(wFDVP)} with $p = 1$ as in this case the epigraphical reformulation \eqref{eq:epi ref scalar} is equivalent to \eqref{rc scalar}. However, for uncertain multiobjective problems, considering just $p=1$ is in general not sufficient which motivates our definition above.

First, for our forthcoming proofs, we need to construct a polyhedral cone which replaces the cone $C$ from Assumption \ref{main_assumption}. For $\alpha > m - 1,$ we consider the cone 
\begin{equation}\label{eq:C^alpha}
    C^\alpha := \{y \in \R^m \mid M^\alpha y \leq 0\},
\end{equation} where
$$M^\alpha := \begin{pmatrix}
- \alpha &1 & \ldots & 1& 1\\
1& -\alpha & \ldots  & 1&1\\
\vdots & \vdots& \ddots & \vdots& \vdots\\
1   & 1 & \ldots &  -\alpha & 1 \\
1 &1 &  \ldots  & 1 &- \alpha
\end{pmatrix}\in\R^{m\times m}.$$
For instance, for $m=2$ and $\alpha >2$ we get the cones
$C^\alpha=\{y\in\R^2\mid \frac{1}{\alpha}y_1\leq y_2\leq \alpha y_1\}.$

\begin{Proposition}\label{prop: Calpha properties}
The following statements hold:
\begin{enumerate}
    \item For any $\alpha > m - 1,$ the set $C^\alpha$ is a closed, convex, pointed and solid cone.
    \item \label{item:calphaunionpareto}$\bigcup_{\alpha > m - 1} C^\alpha = \Int \R_{+}^m \cup \{0\}.$
    \item The family $\{C^\alpha\}_{\alpha  > m - 1 }$ is monotonically  increasing in $\alpha $ with respect to inclusion, i.e., $\alpha_1 \leq \alpha_2 \Longrightarrow C^{\alpha_1} \subseteq C^{\alpha_2}.$
    \item \label{item:CsubsetCalpha} If $C \subseteq \Int \R_{+}^m \cup \{0\}$ is a closed cone, then there exists $\alpha \in \R$ such that $C \subseteq C^\alpha.$ 
\end{enumerate}
\end{Proposition}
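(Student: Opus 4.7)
My plan is to first derive a convenient scalar characterization of the cone $C^\alpha$, which makes (i)--(iii) nearly immediate, and then to handle (iv) via a compactness argument on the unit sphere. Observe that $M^\alpha = e e^\top - (\alpha+1) I$, so the $i$-th row of the defining inequality $M^\alpha y \leq 0$ gives
\[
y \in C^\alpha \iff (\alpha+1)\, y_i \geq e^\top y \;\; \text{for all } i \in [m].
\]
In particular, $C^\alpha$ is defined by $m$ linear inequalities and so is automatically a closed convex cone.

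For the remaining parts of (i), I would analyze the spectrum of $M^\alpha$: its eigenvalues are $m - \alpha - 1$ (with eigenvector $e$) and $-(\alpha+1)$ (multiplicity $m-1$, on $e^\perp$), both strictly negative for $\alpha > m-1$. Hence $M^\alpha$ is invertible, so any $y \in C^\alpha \cap (-C^\alpha)$ satisfies $M^\alpha y = 0$ and therefore $y = 0$; this yields pointedness. Solidness follows because $M^\alpha e = (m-\alpha-1)\,e$ has all strictly negative components, placing $e$ in $\Int C^\alpha$.

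Items (ii) and (iii) then reduce to bookkeeping. Summing the $m$ characterization inequalities yields $(\alpha+1)\,e^\top y \geq m\, e^\top y$, so $e^\top y \geq 0$ (using $\alpha+1 > m$); this combined with $y_i \geq (e^\top y)/(\alpha+1)$ gives $C^\alpha \setminus\{0\} \subseteq \Int \R^m_+$, which is one inclusion of (ii). The reverse inclusion is immediate: given $y \in \Int \R^m_+$, any $\alpha > \max\{m-1,\,e^\top y / \min_i y_i - 1\}$ works. For (iii), $y \in C^{\alpha_1}$ implies $y_i \geq (e^\top y)/(\alpha_1+1) \geq (e^\top y)/(\alpha_2+1)$ whenever $\alpha_1 \leq \alpha_2$, since $e^\top y \geq 0$.

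The main obstacle is (iv). My plan is to pass to the compact set $K := C \cap S^{m-1}$, which is closed in the unit sphere and, by hypothesis, contained in $\Int \R^m_+$. The function $g \colon K \to \R$ defined by $g(y) = e^\top y / \min_i y_i$ is continuous with strictly positive denominator on $K$, so by compactness it attains a finite maximum $g_{\max}$. Choosing any $\alpha > \max\{m-1,\,g_{\max}-1\}$, for every $y \in C \setminus \{0\}$ the zero-homogeneity of $g$ gives $g(y/\|y\|) \leq g_{\max} \leq \alpha+1$, which rearranges to $y_i \geq (e^\top y)/(\alpha+1)$ for all $i$; hence $y \in C^\alpha$. (The degenerate case $C = \{0\}$ is trivial, and if $K$ happens to force $g_{\max} = m$, which occurs only when $C$ is contained in the ray through $e$, a marginal increase of $\alpha$ still suffices.)
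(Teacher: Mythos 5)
Your proof is correct, and parts (i)--(iii) follow essentially the same path as the paper: rewriting $M^\alpha y\leq 0$ as $(\alpha+1)y_i\geq e^\top y$ for all $i$ (equivalently $\sum_{j\neq i}y_j\leq\alpha y_i$), summing these inequalities to get $e^\top y\geq 0$ and hence $C^\alpha\setminus\{0\}\subseteq\Int\R^m_+$, and constructing an explicit $\alpha$ from the ratios $\sum_{j\neq i}y_j/y_i$ for the reverse inclusion and for monotonicity. Two points differ. First, for pointedness you use the spectrum of $M^\alpha=ee^\top-(\alpha+1)I$ and its invertibility to force $C^\alpha\cap(-C^\alpha)\subseteq\ker M^\alpha=\{0\}$; the paper simply treats pointedness as immediate (it also follows from the inclusion $C^\alpha\subseteq\Int\R^m_+\cup\{0\}$ proved in (ii)), so your argument is a self-contained alternative. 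Second, and more substantially, for (iv) the paper argues by contradiction: it takes unit vectors $c^k\in C\setminus C^{m+k}$, extracts a convergent subsequence with limit $\bar c\in C\cap\Int\R^m_+$, and derives a contradiction from the inequality $\sum_{j\neq i}c^k_j>(m+k)c^k_i$ blowing up. You instead give a direct Weierstrass argument: maximize the $0$-homogeneous continuous function $g(y)=e^\top y/\min_i y_i$ over the compact base $C\cap S^{m-1}\subseteq\Int\R^m_+$ and take $\alpha>\max\{m-1,\,g_{\max}-1\}$. Both hinge on the same compactness of $C$ intersected with the unit sphere, but your version is constructive and yields an explicit admissible $\alpha$, whereas the paper's sequence argument avoids introducing $g$. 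One cosmetic remark: in (ii), to conclude strict positivity of the components of a nonzero $y\in C^\alpha$ you should make explicit that $y\geq 0$ together with $y\neq 0$ gives $e^\top y>0$, whence $y_i\geq e^\top y/(\alpha+1)>0$; and your closing parenthetical about the case $g_{\max}=m$ is superfluous, since your choice of $\alpha$ already covers it.
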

\begin{proof}
$(i)$ The closedness, convexity, and pointedness of $C^\alpha$ are clear from the definition. The solidness is an immediate consequence of the fact that $M^\alpha e < 0,$ which implies $e \in \Int C^\alpha.$

$(ii)$ Let $\alpha > m - 1$ and $y \in C^\alpha.$ Then, 
\begin{equation}\label{eq:yinCalpha}
    \forall\; i \in [m]: \sum_{j\neq i} y_j \leq \alpha y_i.
\end{equation} Adding these inequalities over $i \in [m]$ we  get $(m - 1) \sum_{j = 1}^m y_j \leq \alpha\sum_{i = 1}^m y_i,$ or equivalently, $0 \leq (\alpha - (m - 1))\sum_{j = 1}^m y_j.$ This, together with the fact that $\alpha > m - 1,$ implies for all $i\in[m]$
\begin{equation*}
    0 \leq \sum_{j = 1}^m y_j = y_i + \sum_{j \neq i} y_j \leq  y_i+\alpha y_i=(\alpha + 1)y_i. 
\end{equation*} Thus, $y \geq 0.$ Suppose that $y_i = 0$ for some $i \in [m].$ Then, from \eqref{eq:yinCalpha} we find that $\sum_{j \neq i} y_j \leq \alpha y_i = 0$ and therefore $y = 0.$ This shows that $y \in \Int \R_{+}^m \cup \{0\}.$  

Take now any $y \in \Int \R_{+}^m \cup \{0\}.$ Note that if $y = 0$ there is nothing to prove. Thus, suppose that $y > 0$ and define 
$$\alpha := \max\left\{m\,,\, \max_{i\in [m]}\left\{\frac{\sum\limits_{j \neq i}y_j}{y_i}\right\} \right\}.$$ It is then straightforward to check that \eqref{eq:yinCalpha} holds, and therefore $y \in C^\alpha.$

$(iii)$ This follows   from 
\eqref{eq:yinCalpha}.

$(iv)$ Assume this is not true. Then, there exists a sequence $\{c^k\}_{k\in \N} \subseteq C$ such that 
$\|c^k\| = 1$ and $c^k \notin C^{m + k}$ for each $k \in \N.$ In particular $c^k\in \Int \R^m_+$ for all $k$. Since $\{c^k\}_{k\in \N} $ is bounded, we can assume without loss of generality that it is convergent. Let $\bar{c}:= \lim_{k\to \infty} c^k.$ Then, by the closedness of $C,$  we have that $\bar{c} \in C.$ Further, $\bar c\neq 0$ as $\|\bar c\|=1$ and hence, in particular, $\bar c\in\Int\R^m_+$.
On the other hand, since $c^k \notin C^{m + k}$ for every $k \in \N,$ we can assume, without loss of generality, the existence of $i \in [m]$ such that 
\begin{equation}\label{eq:opositeineqC}
    \forall\; k \in \N:\ \sum\limits_{j \neq i}c^k_j
    >\alpha c^k_i>(m+k)c_i^k .
\end{equation} Then, taking the limit in \eqref{eq:opositeineqC} for $k \to \infty$ we obtain, as $\bar c_i>0$, 
\begin{equation*}
\sum\limits_{j \neq i}\bar c_j
    \geq \lim_{k\to\infty}  (m+k)c_i^k =\infty,   
\end{equation*} a contradiction.
\end{proof}

The important consequence of Proposition \ref{prop: Calpha properties} (iv) is that we can restrict ourselves to consider proper boundedness of functions only with respect to cones in the parametric family $\{C^\alpha\}_{\alpha > m - 1}:$
\begin{Proposition} \label{prop:Calpha is enough}
Let Assumption \ref{basic_setup} hold. Then, $f$ is properly lower bounded on $\Omega \times \mathcal{U}$ if and only if there exists $\ell b \in \R^m$ and $\alpha > m - 1$ such that $f$ is properly lower bounded on $\Omega \times \mathcal{U}$ with respect to $\ell b$ and $C^\alpha.$
\end{Proposition}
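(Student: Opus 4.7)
The plan is to derive both directions directly from Proposition \ref{prop: Calpha properties}, with essentially no additional work.

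For the ``if'' direction, suppose there exist $\ell b \in \R^m$ and $\alpha > m-1$ with $f(\Omega\times\mathcal U) \subseteq \{\ell b\} + \Int C^\alpha$. Then $C^\alpha$ is itself a closed, convex, pointed, solid cone by Proposition \ref{prop: Calpha properties}(i), and by part (ii) of the same proposition we have $C^\alpha \subseteq (\Int\R^m_+) \cup \{0\}$. Hence $C^\alpha$ fulfills the cone requirements of Definition \ref{def:properlowerboundedness}, and $f$ is properly lower bounded on $\Omega\times\mathcal U$ (with respect to $\ell b$ and $C^\alpha$).

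For the ``only if'' direction, assume $f$ is properly lower bounded on $\Omega\times\mathcal U$ with respect to some $\ell b \in \R^m$ and some closed, convex, pointed, solid cone $C \subseteq (\Int\R^m_+)\cup\{0\}$, so that $f(\Omega\times\mathcal U)\subseteq \{\ell b\}+\Int C$. By Proposition \ref{prop: Calpha properties}(iv), there exists $\alpha \in \R$ with $C \subseteq C^\alpha$, and using the monotonicity in part (iii) I may enlarge $\alpha$ if necessary so that $\alpha > m-1$ while keeping the inclusion $C\subseteq C^\alpha$.

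The one small point that needs to be verified is that the inclusion of cones passes to their interiors, i.e.\ $\Int C \subseteq \Int C^\alpha$. This is where I would be most careful, but it is routine: since $C$ is solid and $C\subseteq C^\alpha$, any point $c\in\Int C$ admits an open neighborhood contained in $C\subseteq C^\alpha$, so $c\in\Int C^\alpha$. Combining this with the hypothesis yields
\[
f(\Omega\times\mathcal U)\subseteq \{\ell b\}+\Int C \subseteq \{\ell b\}+\Int C^\alpha,
\]
which, together with parts (i) and (ii) of Proposition \ref{prop: Calpha properties}, shows that $f$ is properly lower bounded with respect to $\ell b$ and $C^\alpha$. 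No genuine obstacle arises; the content of the proposition is really packaged in Proposition \ref{prop: Calpha properties}(iv).
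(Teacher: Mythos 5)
Your proof is correct and follows exactly the route the paper intends: the statement is presented there as an immediate consequence of Proposition \ref{prop: Calpha properties}\,(iv) (with (i) and (ii) giving that each $C^\alpha$ is an admissible cone), and you simply make the routine details explicit, including the observation $\Int C \subseteq \Int C^\alpha$, which is the right small point to check. Note only that the family $\{C^\alpha\}$ is defined for $\alpha > m-1$ in the first place, so the enlargement step via part (iii) is harmless but unnecessary.
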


We can now make use of the special structure of $C^\alpha$. The next lemma prepares a result which is required for the proof of   Lemma \ref{lem:finitelocallowerbounds}. That again is needed for applying 
Theorem \ref{thm:splmainresults} \ref{item:fdvpvpp} to our setting.

\begin{Lemma}\label{lem:stronglyefficientmp}
For $\alpha > m - 1,$ consider the cone $C^\alpha$ given by \eqref{eq:C^alpha}. Let $a,b \in \R^m$ be such that $a < b,$ and consider elements $\ell, \bar{z} \in B:= (\{a\} + C^\alpha) \cap (\{b \}- \R_+^m)$ such that $\ell < \bar{z}.$ Then, for each $i \in [m],$ the multiobjective optimization problem
\begin{equation}\label{plzi}
\begin{array}{rl}
 & \min\limits_y y   \tag{$\mathcal{P}_i(\ell,\bar{z})$} \\
                        & \; \;\textup{s.t.} \;\; y_i = \bar{z}_i,\\
                        & \qquad \; \; y \in B \cap (\{\ell\} + \R_+^m)
\end{array} 
\end{equation} has a strongly efficient solution.
\end{Lemma}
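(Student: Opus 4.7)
The plan is a lattice argument. Denote the feasible set of \eqref{plzi} by
$$F := \{y \in \R^m \mid y_i = \bar{z}_i,\ \ell \leq y \leq b,\ M^\alpha(y - a) \leq 0\}.$$
Note that $F$ is closed, bounded, hence compact, and nonempty since $\bar{z} \in F$ (indeed $\bar{z} \in B$, $\ell < \bar{z}$, and the $i$-th coordinate trivially agrees). A strongly efficient solution is, by definition, an element $\bar{y} \in F$ satisfying $\bar{y} \leq y$ for every $y \in F$. So it suffices to show that $F$ admits a componentwise minimum.

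The first key step I would establish is that $F$ is closed under componentwise minima: for any $y^1, y^2 \in F$, the vector $y := \min(y^1, y^2)$ (taken componentwise) also lies in $F$. The box constraints $\ell \leq y \leq b$ and the equality $y_i = \bar{z}_i$ are immediate. For the cone constraint $M^\alpha(y - a) \leq 0$, I would fix a row index $k \in [m]$ and, without loss of generality, assume $y_k = y^1_k$. The particular sign pattern of $M^\alpha$ (namely $-\alpha$ on the diagonal and $+1$ elsewhere) then yields
$$(M^\alpha y)_k = -\alpha y^1_k + \sum_{j \neq k} y_j \leq -\alpha y^1_k + \sum_{j \neq k} y^1_j = (M^\alpha y^1)_k \leq (M^\alpha a)_k,$$
since $y_j = \min(y^1_j, y^2_j) \leq y^1_j$ for all $j$. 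This establishes the lattice property.

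Given this, I would finish as follows. For each $j \neq i$, compactness of $F$ produces some $y^{(j)} \in F$ attaining $\mu_j := \min\{y_j \mid y \in F\}$. By induction on the lattice property, $\bar{y} := \min_{j \neq i}\, y^{(j)}$ belongs to $F$. For each $k \neq i$ one has $\bar{y}_k \leq y^{(k)}_k = \mu_k$, and by minimality of $\mu_k$ also $\bar{y}_k \geq \mu_k$, so $\bar{y}_k = \mu_k$; meanwhile $\bar{y}_i = \bar{z}_i$. Hence for any $y \in F$ one has $\bar{y}_k = \mu_k \leq y_k$ for $k \neq i$ and $\bar{y}_i = y_i$, so $\bar{y} \leq y$. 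Thus $\bar{y}$ is a strongly efficient solution of \eqref{plzi}.

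The main obstacle is really the verification of the lattice property; once available, everything else is routine. This verification depends crucially on the specific sign pattern of $M^\alpha$ and would generally fail for arbitrary closed convex pointed solid cones $C \subseteq \Int \R_+^m \cup \{0\}$, which is presumably why the parametric family $\{C^\alpha\}_{\alpha > m-1}$ from Proposition \ref{prop: Calpha properties} is introduced in the first place.
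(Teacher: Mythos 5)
Your proof is correct, but it takes a genuinely different route from the paper's. The paper constructs the strong minimizer as the unique fixed point of an explicit map $T\colon\R^m\to\R^m$ that keeps the $i$-th coordinate at $\bar z_i$ and replaces each other coordinate $k\ne i$ by $\max\{\ell_k,\,a_k+\tfrac{1}{\alpha}\sum_{j\ne k}(y_j-a_j)\}$; it shows that $T$ maps feasible points of \eqref{plzi} to smaller feasible points and that $T$ is a contraction in the $\ell_1$-norm with constant $(m-1)/\alpha<1$ (this is exactly where $\alpha>m-1$ is used quantitatively), so Banach's fixed point theorem yields a point $\ell^i$, and the iterates $T^{(r)}(y)$ started from any feasible $y$ converge to $\ell^i$ and stay below $y$, giving $\ell^i\le y$. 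Your argument instead exploits that the feasible set is compact and meet-closed (closed under componentwise minima), which you verify correctly from the sign pattern of $M^\alpha$: in each row only the diagonal entry is negative, so passing to the componentwise minimum can only decrease $\sum_{j\ne k}(y_j-a_j)$ while the dominant term $\alpha(y_k-a_k)$ is evaluated at whichever of $y^1_k,y^2_k$ attains the minimum; combining the coordinatewise minimizers $y^{(j)}$, $j\ne i$, through this lattice property then produces the componentwise least feasible point, which is the strongly efficient solution. What each approach buys: yours is more elementary (no fixed point theorem, and no contraction estimate, so the specific threshold $\alpha>m-1$ plays no role in your argument beyond guaranteeing the properties of $C^\alpha$ themselves), while the paper's proof additionally provides a constructive iteration converging to the strong minimizer, which may have algorithmic value. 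One cosmetic remark: for $m=1$ the index family $\{j\ne i\}$ is empty, but there the constraint $y_i=\bar z_i$ pins down the feasible set to a single point, so the claim is trivial and your construction can simply return $\bar z$.
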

\begin{proof}
Fix $i \in [m].$ Then, taking into account the structure of $M^\alpha,$ \eqref{plzi} takes the form 
\begin{equation}
\begin{array}{rl}
 & \min\limits_y y   \tag{$\mathcal{P}_i(\ell,\bar{z})$} \\
                        & \; \;\textup{s.t.} \;\; y_i = \bar{z}_i,\\
                        & \qquad \; \; \sum\limits_{j \neq k}(y_j - a_j) \leq \alpha(y_k - a_k) \;\forall\  k\in [m],\\
                        & \qquad \; \; \ell \leq y\leq b. \\
                       \end{array} 
\end{equation} Consider now the mapping $T: \R^m \rightarrow \R^m$ defined by 
\begin{equation*}
    \forall \; y \in \R^m, k \in [m]: \left(T(y)\right)_k:= \left\{
\begin{array}{ll}
      \max\left\{\ell_k\,,\, a_k + \frac{\sum\limits_{j \neq k}(y_j - a_j)}{\alpha}\right\} &  \textrm{ if } k \neq i ,\\
      \bar{z}_i & \textrm{ if } k = i .\\
\end{array} 
\right. 
\end{equation*} We divide the rest of the proof in several steps.
First, as step 1, we 
show that 
\begin{equation*}
     y \textrm{ feasible for } \eqref{plzi} \Longrightarrow T(y) \textrm{ feasible for } \eqref{plzi} \textrm{ and }T(y) \leq y.
\end{equation*} We start by proving the inequality 
\begin{equation}\label{eq:Tyleqy}
    T(y) \leq y.
\end{equation}  Indeed, because of the feasibility of $y$ for \eqref{plzi}, for $k = i$ we have $\left(T(y)\right)_k = \bar{z}_i = y_k.$ Similarly, the feasibility of $y$ for \eqref{plzi}  implies
\begin{equation}\label{eq:implyfeasible}
  \forall\; k\in [m]\setminus\{i\}: \sum\limits_{j\neq k}(y_j - a_j) \leq \alpha(y_k - a_k) \textrm{ and } \ell_k \leq y_k.
\end{equation} From \eqref{eq:implyfeasible} it  follows that 
\[ 
    \forall\; k\in [m]\setminus\{i\}: y_k \geq \max\left\{\ell_k, a_k + \frac{\sum\limits_{j \neq k}(y_j - a_j)}{\alpha}\right\} = \left(T(y)\right)_k. 
\] This proves \eqref{eq:Tyleqy}.

Next, we show that $T(y)$ is feasible for \eqref{plzi}. Indeed, by the definition of $T,$ we have $\left(T(y)\right)_i = \bar{z}_i$ and $\ell \leq T(y).$ Moreover, from \eqref{eq:Tyleqy} it follows $T(y) \leq y \leq b.$ It thus remains to show that 
\begin{equation}\label{eq:coneineq}
   \forall\; k \in [m]: \sum\limits_{j \neq k}((T(y))_j - a_j) \leq \alpha((T(y))_k - a_k).
\end{equation} In order to see this note that, for $k = i,$ the inequality \eqref{eq:Tyleqy} and the feasibility of $y$ for \eqref{plzi} imply 
$$\sum\limits_{j \neq k}((T(y))_j - a_j) \leq \sum\limits_{j \neq k}(y_j - a_j) \leq \alpha (y_i - a_i) = \alpha (\bar{z}_i - a_i) = ((T(y))_k - a_k).$$ Similarly, \eqref{eq:Tyleqy} implies for all $k \in [m] \setminus\{i\}$
$$\begin{array}{rcll}
\sum\limits_{j \neq k}((T(y))_j - a_j) & \leq &  \sum\limits_{j \neq k}(y_j - a_j)  
   =  \alpha \left(a_k + \frac{\sum\limits_{j \neq k}(y_j - a_j)}{\alpha} -  a_k\right)\\
     & \leq & \alpha \left(\max\left\{\ell_k, a_k + \frac{\sum\limits_{j \neq k}(y_j - a_j)}{\alpha}\right\} -  a_k\right)
      &=  \alpha ((T(y))_k - a_k).
\end{array} $$
The feasibility of $T(y)$ follows. 

\medskip

Second, as step 2, we show that the mapping $T$ has a unique fixed point $\ell^i$.  
This will be an immediate consequence of Banach's fixed point theorem, see, for example, \cite[Theorem 5.1-2]{kreyszig89}. Thus, we proceed to show that $T$ is a contraction, i.e., there exists a constant $\gamma < 1$ such that for all 
$
     y^1, y^2 \in \R^m$ we have $ \|T(y^1) - T(y^2)\|_1 \leq \gamma \|y^1 - y^2\|_1.
$ 
Indeed, fix $y^1, y^2 \in \R^m.$ Then, 
$$ \begin{array}{rcl}
\|T(y^1) - T(y^2)\|_1 & = & \sum\limits_{k\neq i} \left|\max\left\{\ell_k, a_k + \frac{\sum\limits_{j\neq k} (y^1_j - a_j)}{\alpha}\right\} - \max\left\{\ell_k, a_k + \frac{\sum\limits_{j\neq k} (y^2_j - a_j)}{\alpha}\right\}\right|\\
                     & \leq & \frac{1}{\alpha} \sum\limits_{k \neq i} \left| \sum\limits_{j \neq k} (y^1_j - y^2_j)\right| 
                      \leq  \frac{1}{\alpha} \sum\limits_{k \neq i} \sum\limits_{j \neq k} |y^1_j - y^2_j|\\
                     & = & \frac{1}{\alpha} \sum\limits_{k \neq i} \|y^1 - y^2\|_1 
                       =   \left(\frac{m-1}{\alpha}\right)\|y^1 - y^2\|_1.
\end{array} $$
Therefore, taking into account that $\alpha > m - 1,$  the mapping $T$ is a contraction with $\gamma :=  \frac{m-1}{\alpha} .$ Applying now Banach's fixed point theorem, we obtain the desired point $\ell^i.$ 

\medskip

Finally, as step 3, we show that $\ell^i$ is a strong minimizer of \eqref{plzi}. 
Indeed, as a consequence of Banach's fixed point theorem, the feasibility of $\bar{z}$ for \eqref{plzi} and the statement proved in Step 1, we have that:
\begin{itemize}
    \item $\ell^i = \lim\limits_{r \to \infty} T^{(r)}(\bar{z}),$ where we denote with $T^{(r)}$ the $r$-times iterative application of the mapping $T$,
    \item every point in the sequence $\left\{T^{(r)}(\bar{z})\right\}_{r \in \N}$ is feasible for \eqref{plzi}.
\end{itemize} In particular, due to the closedness of the feasible region of \eqref{plzi}, we find that $\ell^i$ is also feasible for this problem. Take now a feasible point $y.$ Then, again by Banach's fixed point theorem and the statement in Step 1 of this proof, we find that $\ell^i = \lim\limits_{r \to \infty} T^{(r)}(y) \leq y.$ The strong efficiency of $\ell^i$ for \eqref{plzi} follows.
\end{proof}

As already mentioned, for applying 
Theorem \ref{thm:splmainresults} \ref{item:fdvpvpp}, we need the next lemma before we can formulate our main results on the exactness of the approximation. 

\begin{Lemma}\label{lem:finitelocallowerbounds}
Let $s \in \N \cup \{0\}$ and $\alpha > m -1$ be given. For $a, b \in \R^m,$ consider the set $B:= (\{a \}+ C^\alpha) \cap (\{b\} -\R_+^m).$  Then, for any $S \subseteq \Int B$ such that $|S| = s,$ the set 

\[A(S) := \left\{
\begin{array}{ll}
      \{a\}, & \textrm{if } s = 0,\\
       \bigcap_{z \in S} (\{z\} - \Int \R_{+}^m)^c \cap B, & \textrm{otherwise} \\

\end{array} 
\right. \] satisfies 

$$|\Min(A(S), \R_+^m)| \leq m^s.$$
\end{Lemma}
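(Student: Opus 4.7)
The plan is to partition $A(S)$ into at most $m^s$ pieces $A_\phi$ indexed by assignments $\phi\colon[s]\to[m]$ and to show that each piece has at most one $\R^m_+$-minimal element. Then $|\Min(A(S),\R^m_+)|\leq m^s$ follows by summing.

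The case $s=0$ is immediate since $\Min(\{a\},\R^m_+)=\{a\}$. For $s\geq 1$, enumerate $S=\{z^1,\ldots,z^s\}$ and note that a point $y\in B$ lies in $A(S)$ exactly when, for every $k\in[s]$, at least one coordinate $i\in[m]$ satisfies $y_i\geq z^k_i$. For each assignment $\phi\colon[s]\to[m]$, aggregate the selected inequalities coordinatewise to obtain the componentwise lower bound $\ell(\phi)_i:=\max(\{a_i\}\cup\{z^k_i : \phi(k)=i\})$, and set $A_\phi:=B\cap(\{\ell(\phi)\}+\R^m_+)$. A straightforward verification then gives $A(S)=\bigcup_\phi A_\phi$: the inclusion $\supseteq$ follows because $y\in A_\phi$ implies $y_{\phi(k)}\geq \ell(\phi)_{\phi(k)}\geq z^k_{\phi(k)}$ for every $k$, and the reverse inclusion follows by reading off a valid $\phi$ from any $y\in A(S)$ and recalling that $y\geq a$ since $y\in B\subseteq\{a\}+C^\alpha\subseteq\{a\}+\R^m_+$. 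Since each $A_\phi\subseteq A(S)$, any $\R^m_+$-minimal point of $A(S)$ lying in $A_\phi$ is also minimal in $A_\phi$, hence $\Min(A(S),\R^m_+)\subseteq\bigcup_\phi\Min(A_\phi,\R^m_+)$, and it suffices to show $|\Min(A_\phi,\R^m_+)|\leq 1$ for every $\phi$.

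This last step is the main obstacle; here the polyhedral structure of $C^\alpha$ becomes essential, and I would adapt the fixed-point construction from the proof of Lemma~\ref{lem:stronglyefficientmp} to the map
\[
(Ty)_i:=\max\Bigl\{\ell(\phi)_i,\ a_i+\tfrac{1}{\alpha}\sum_{j\neq i}(y_j-a_j)\Bigr\}.
\]
The same one-line estimate as in Step~2 of the proof of Lemma~\ref{lem:stronglyefficientmp} shows that $T$ is a contraction on $(\R^m,\|\cdot\|_1)$ with factor $(m-1)/\alpha<1$, so it has a unique fixed point $y^*$. Reading off the fixed point equation yields $y^*\geq\ell(\phi)$ together with $\alpha(y^*_i-a_i)\geq\sum_{j\neq i}(y^*_j-a_j)$ for every $i$, i.e., $y^*-a\in C^\alpha$ by the definition of $M^\alpha$. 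For any $y\in A_\phi$, the two conditions $y\geq\ell(\phi)$ and $y\in\{a\}+C^\alpha$ give $Ty\leq y$ componentwise, and componentwise monotonicity of $T$ then yields $y^*\leq y$ by iteration. Consequently, whenever $A_\phi\neq\emptyset$, $y^*$ inherits the upper bound $y^*\leq b$ from any feasible point, lies in $A_\phi$, and is the unique strong $\R^m_+$-minimum; the empty case is vacuous. Summing $|\Min(A_\phi,\R^m_+)|\leq 1$ over the $m^s$ assignments $\phi\colon[s]\to[m]$ completes the proof.
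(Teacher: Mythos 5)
Your proof is correct, and it takes a genuinely different route from the paper. The paper argues by induction on $s$: it fixes $\bar z\in S$, takes the (at most $m^s$) minimal points of $A(S\setminus\{\bar z\})$, and for each such point $\ell<\bar z$ manufactures $m$ new candidates as strong minima of the equality-constrained problems $(\mathcal{P}_i(\ell,\bar z))$ from Lemma~\ref{lem:stronglyefficientmp}; it then needs external stability of $A(S\setminus\{\bar z\})$ (citing Sawaragi et al.) and an intermediate value argument to show these candidates dominate $A(S)$, yielding the recursion $(m-1)m^s+m^s=m^{s+1}$. You instead cover $A(S)$ directly by the $m^s$ sets $A_\phi=B\cap(\{\ell(\phi)\}+\R^m_+)$ indexed by choices $\phi\colon[s]\to[m]$ of a ``non-dominated coordinate'' for each $z^k$, and show each nonempty $A_\phi$ has a least element, so $\Min(A(S),\R^m_+)$ is contained in the set of these at most $m^s$ fixed points. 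Your least-element step reuses the paper's contraction idea (and the same factor $(m-1)/\alpha<1$, which is where $C^\alpha$ enters), but applied to the simpler unconstrained-in-$y_i$ problem, so you bypass Lemma~\ref{lem:stronglyefficientmp} as stated, the induction, and the external-stability citation; your argument also never uses $S\subseteq\Int B$. What the paper's inductive scheme buys is an explicit description of how the minimal set evolves as scenarios are added one at a time, which matches how Lemma~\ref{lem:stronglyefficientmp} is set up; what your covering argument buys is a shorter, more transparent proof with an explicit parametrization of the candidate minimal points as fixed points $y^*(\phi)$, at the same bound $m^s$.
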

\begin{proof}
We argue by induction on $s.$ For $s = 0$ we have indeed $|\Min(A(\emptyset), \R_+^m)|  = 1.$ Suppose now that for some $s \in \N$ the statement holds and let $S \subseteq \Int B$ be such that  $|S| = s + 1.$ Fix $\bar{z} \in S.$ Then, $|S\setminus \{\bar{z}\}| = s$ and, according to the induction hypothesis, we have 
$$|\Min(A(S\setminus \{\bar{z}\}), \R_+^m)| \leq m^s.$$ Define $D:= \{\ell \in \Min(A(S\setminus \{\bar{z}\}), \R_+^m) \mid \ell < \bar{z}\}$ and, for every $\ell \in D$ and each $i \in [m],$ let us denote by $\ell^i$ the strongly efficient solution of the problem \eqref{plzi}. Note that $\ell^i$ is well defined because of Lemma \ref{lem:stronglyefficientmp}. Set now
$$P := \bigcup_{\ell \in D}\{\ell^i \mid i \in [m]\}.$$ 
Our next argumentation is based on the following three statements.

First, we have  $P \subseteq A(S).$    
    Indeed, otherwise there exists $\ell \in D$ and $i\in [m]$ such that $\ell^i \notin A(S).$ Thus, either $\ell^i \notin B$ or there exists $z \in S$ such that $\ell^i < z.$ However, since $\ell^i$ is feasible for \eqref{plzi}, we have in particular that $\ell^i \in B.$ Thus, it can only be that $\ell^i < z$ for some $z \in S.$ Note that, because $\ell^i$ is feasible for \eqref{plzi}, we also have $\ell^i_i = \bar{z_i}.$ Therefore, $z \in S \setminus\{\bar{z}\}.$  But then, again by the feasibility of $\ell^i$ for \eqref{plzi}, we find that 
    $\ell \leq \ell^i < z,$ a contradiction to the fact that $\ell \in A(S\setminus \{\bar{z}\}).$
    
Second, we show  the inclusion $\Min(A(S\setminus \{\bar{z}\}), \R_+^m) \setminus D \subseteq A(S).$ Therefore,  take any $\ell \in \Min(A(S\setminus \{\bar{z}\}), \R_+^m).$ Then, by definition, we have $\ell \in B$ and $\ell \not< z$  for every $z \in S \setminus\{\bar{z}\}.$ Moreover, since $\ell \notin D,$ it also follows that $\ell \not< \bar{z}.$ Thus,  $\ell \in A.$
    
Finally, as third statement, we prove  $A(S) \subseteq \left(P \cup \left(\Min(A(S\setminus \{\bar{z}\}), \R_+^m) \setminus D\right) \right)+ \R_+^m.$
Let $y \in A(S).$ 
Then, in particular, $y \in A(S \setminus\{\bar{z}\}).$ As external stability holds, see \cite[Theorem 3.2.10]{Sawaragi}, there exists  $\ell \in \Min(A(S \setminus\{\bar{z}\}), \R_+^m),$ with $\ell \leq y.$ Suppose first that $\ell \not< \bar{z}.$ Then, by definition $\ell \notin D,$ and there is nothing to prove. Assume now that $\ell < \bar{z}$ and, for $t \in [0,1],$ set     $$y^t:= t \ell + (1 -t) y.$$ Then, because $y$ and $\ell$ belong to the convex set $B\cap (\{\ell\} + \R_+^m),$ it follows that $y^t \in B\cap (\{\ell\} + \R_+^m)$ for every $t\in [0,1]$ and, as $\ell\leq y$, also $y^t\leq y$. Moreover, since $y \not< \bar{z},$ for some $i \in [m]$ we have $y_i \geq \bar{z}_i.$ Since $y^0_i = y_i \geq \bar{z}_i$ and $y^1_i = \ell_i < \bar{z}_i,$ it follows from the intermediate value theorem the existence of $\bar{t} \in [0,1)$ such that $y^{\bar{t}}_i = \bar{z}_i.$ Then, it follows that the point $y^{\bar{t}}$ is feasible for \eqref{plzi}. As  $\ell^i$ is by Lemma \ref{lem:stronglyefficientmp} strongly efficient for this problem, we find that $\ell^i \leq y^{\bar{t}} \leq y,$ as desired.

Now, according to these three statements, we deduce that $$\Min(A(S), \R_+^m) \subseteq  P \cup \left(\Min(A(S\setminus \{\bar{z}\}), \R_+^m) \setminus D\right).$$ Thus, taking into account that $|P|\leq m|D|,\ |D|\leq m^s,\  |\Min(A(S\setminus \{\bar{z}\}), \R_+^m)| \leq m^s,$ we find that
\begin{eqnarray*}
|\Min(A(S), \R_+^m)| & \leq &  |P| + |\Min(A(S\setminus \{\bar{z}\}), \R_+^m)| - |D|  \\
                     & \leq &  m|D| + |\Min(A(S\setminus \{\bar{z}\}), \R_+^m)| - |D|\\
                     & =    &  (m - 1)|D| + |\Min(A(S\setminus \{\bar{z}\}), \R_+^m)|\\ 
                     & \leq &  (m - 1)m^s + m^s\\
                     & = & m^{s + 1},
\end{eqnarray*} and the induction step follows. This completes the proof.
\end{proof}

\begin{Remark}\label{remark:closedA}
Note that by Proposition \ref{prop: cl A-K equivalent} it holds 
$$
       \bigcap_{z \in S} (\{z\} - \Int \R_{+}^m)^c \cap B 
=   (S - \Int \R_{+}^m)^c \cap B \\
=  \cl (S - \Int \R_{+}^m)^c \cap B. $$  
\end{Remark}

Now we are able to give conditions which ensure the exactness of the reformulation \eqref{mp}, which provides in the general setting   approximations only.

\begin{Theorem} Let Assumption \ref{main_assumption} hold and suppose that at least one of the following conditions is fulfilled:
\begin{enumerate}
    \item $|\Omega| < + \infty,$
    \item $|\mathcal{U}|< + \infty$ and $C = C^\alpha$ for some $\alpha > m - 1.$
    \item $\mathcal{U} = \mathcal{U}_1 \times \ldots \times \mathcal{U}_m$, where $\mathcal{U}_j \subseteq \R^{k_j}$ are such that $\sum_{j = 1}^m k_j = k$ and $$f(x,u) = \begin{pmatrix} f_1(x, u^1)\\ \vdots \\ f_m(x, u^m)\end{pmatrix},$$ with $u^j \in \R^{k_j},$ and $C = C^\alpha$ for some $\alpha > m - 1.$
\end{enumerate}  Then,
$$ \eqref{ump} \ \mbox{  satisfies \textup{(wFDVP)}.}$$
Moreover, for $(i)$  we can choose $p=|\Omega|-1$, in case $(ii)$  we can choose $p=m^{|\mathcal{U}|}$, and in $(iii)$ the choice $p = m$ is sufficient. 
\end{Theorem}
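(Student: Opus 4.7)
The plan is to reduce each of the three cases to Theorem~\ref{thm:splmainresults}\ref{item:fdvpvpp} applied to the set-valued mapping $F$ from \eqref{eq:Fhood}. Combining Lemma~\ref{lem:mop=mop'}, which identifies $\wargmin_x \eqref{mp}$ with $\wargmin_x \eqref{vpp}$, and Proposition~\ref{prop:spu=spl}, which identifies $\wargmin \eqref{ump}$ with $\wargmin^l \eqref{sp}$, the property \textup{(wFDVP)} for \eqref{ump} is exactly the weakly minimal finite dimensional vectorization property for \eqref{sp}. Case~$(i)$ is then immediate from the first alternative of Theorem~\ref{thm:splmainresults}\ref{item:fdvpvpp}, yielding $p=|\Omega|-1$.

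For cases~$(ii)$ and $(iii)$ the plan is to verify the hypothesis $\max_{x\in\Omega}|\Min(F(x),\R^m_+)|<+\infty$ of the second alternative of Theorem~\ref{thm:splmainresults}\ref{item:fdvpvpp} and to extract the sharp bound. This will be done via Lemma~\ref{lem:finitelocallowerbounds}, whose restriction to cones of the form $C^\alpha$ is what forces the extra assumption $C=C^\alpha$ in these two cases. In both, for each fixed $x\in\Omega$, I would identify $F(x)$ with a set $A(S)$ from Lemma~\ref{lem:finitelocallowerbounds} by taking $a=\ell b$, $b=ub(x)$ and choosing $S$ appropriately; the required containment $S\subseteq \Int B$ will follow from \eqref{eq:F_UsubsetyU} together with $(\ell b,ub(x))_{C^\alpha}\subseteq \Int B$, while Remark~\ref{remark:closedA} translates the representation $\bigcap_{z\in S}(\{z\}-\Int\R^m_+)^c\cap B$ into the $\cl(\cdot)^c$ form used in \eqref{eq:Fhood}.

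For case~$(ii)$ I would take $S=F_{\mathcal{U}}(x)$, whose cardinality is at most $|\mathcal{U}|$; Lemma~\ref{lem:finitelocallowerbounds} then yields $|\Min(F(x),\R^m_+)|\le m^{|\mathcal{U}|}$ uniformly in $x$, so that Theorem~\ref{thm:splmainresults}\ref{item:fdvpvpp} gives \textup{(wFDVP)} with $p=m^{|\mathcal{U}|}$. For case~$(iii)$ the key observation is that the product structure of $\mathcal{U}$ together with the separable form of $f$ gives
\begin{equation*}
F_{\mathcal{U}}(x)-\R^m_+=\{M(x)\}-\R^m_+,\qquad M_j(x):=\max_{u^j\in\mathcal{U}_j}f_j(x,u^j),
\end{equation*}
and moreover $M(x)\in F_{\mathcal{U}}(x)\subseteq \Int B$, because all componentwise maxima are attained simultaneously by a single scenario $(u^1_*,\ldots,u^m_*)\in \mathcal{U}$. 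Applying Lemma~\ref{lem:finitelocallowerbounds} with $S=\{M(x)\}$ (so $s=1$) then yields $|\Min(F(x),\R^m_+)|\le m$ and hence $p=m$.

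The step that will require the most care is case~$(iii)$: one must notice that the product structure collapses $F_{\mathcal{U}}(x)-\R^m_+$ to the downward closure of the single point $M(x)$, since taking $S=F_{\mathcal{U}}(x)$ directly, as in case~$(ii)$, would only give the bound $m^{|F_{\mathcal{U}}(x)|}$, which is in general infinite because $\mathcal{U}$ is merely assumed to be compact. Once this collapsing and the inclusion $S\subseteq \Int B$ are in place, the rest is a direct application of Theorem~\ref{thm:splmainresults}\ref{item:fdvpvpp}.
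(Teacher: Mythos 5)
Your proposal is correct and follows essentially the same route as the paper: case (i) via Proposition \ref{prop:spu=spl}, Lemma \ref{lem:mop=mop'} and Theorem \ref{thm:splmainresults}\ref{item:fdvpvpp}; case (ii) by applying Lemma \ref{lem:finitelocallowerbounds} (with Remark \ref{remark:closedA}) to $S=F_{\mathcal{U}}(x)$; and case (iii) by first collapsing $F_{\mathcal{U}}(x)-\R^m_+$ to the downward closure of the single worst-case point (the paper's $f_{\mathcal{U}}(x)$, your $M(x)$), which is exactly how the paper reduces (iii) to the argument of (ii) with a singleton, yielding $p=m$.
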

\begin{proof}
$(i)$ This is a direct consequence of Proposition \ref{prop:spu=spl} and Theorem \ref{thm:splmainresults} \ref{item:fdvpvpp}.

$(ii)$ Note that under our assumptions $|F_{\mathcal{U}}(x)| \leq |\mathcal{U}|< + \infty$ for every $x \in  \Omega.$ Therefore, applying Lemma \ref{lem:finitelocallowerbounds} and Remark \ref{remark:closedA}, we deduce that the set-valued mapping $F$ given in \eqref{eq:Fhood} satisfies 
$$\max\limits_{x\in \Omega} |\Min(F(x),\R^m_+)| \leq m^{|\mathcal{U}|} < + \infty.$$ Thus, from  Lemma \ref{lem:mop=mop'}, Theorem \ref{thm:splmainresults} \ref{item:fdvpvpp} and Proposition \ref{prop:spu=spl}, we find that for $p = m^{|\mathcal{U}|} $ 
$$\wargmin_x \eqref{mp} = \wargmin_x \eqref{vpp} = \wargmin^l \eqref{sp} = \wargmin \eqref{ump},$$ as desired.

$(iii)$ Under our assumptions, we can consider the function $f_{\mathcal{U}}: \Omega \to \R^m$ defined as 
$$f_{\mathcal{U}}(x) := \begin{pmatrix} \max\limits_{u^1 \in \mathcal{U}_1} f_1(x, u^1)\\ \vdots \\ \max\limits_{u^m \in \mathcal{U}_m}  f_m(x, u^m)\end{pmatrix}.$$ Then, it is easy to see that $f_{\mathcal{U}} (x) \in F_{\mathcal{U}}(x)$ for each $x \in \Omega$ and that 
\begin{equation}\label{eq:owc}
     \forall\; x \in \Omega: F_{\mathcal{U}}(x) - \R^m_+ = \{f_{\mathcal{U}} (x)\}  -\R^m_+.
\end{equation} Therefore, taking into account \eqref{eq:owc}, we can assume without loss of generality that $|\mathcal{U}| = 1$ and argue like in $(ii).$
\end{proof}

\section{Conclusions}

Within this paper we followed the set-based minmax approach for studying uncertain multiobjective optimization problems. This leads to set-optimization problems for which hardly solution methods exist so far. By constructing  a new suitable set-valued mapping with compact values, we found a reformulation as a set optimization problem for which the strict  lower  type binary relation can be used. This allows to formulate a multiobjective replacement problem for which the weakly efficient solutions and the $\varepsilon$-weakly efficient solutions approximate the 
robust weakly efficient solutions to a desired accuracy. In addition to that, the approximation quality can be improved iteratively as there is a hierarchy of the approximation quality due to monotonicity results.

Moreover, we found a reformulation of this multiobjective optimization problem using semi-infinite constraints, which is the desired epigraphical reformulation in the multiobjective setting 
 and generalizes single-objective epigraphical reformulations.  We have provided conditions under which this problem even delivers all 
robust weakly efficient solutions and not just an approximation of those, i.e., under which it is exact.

As the weakly efficient solutions of the problem \eqref{mp}
are good approximations, the next step is to examine how to solve  these  multiobjective semi-infinite problems. An advantage of our method is that, since we removed the set-valued structure, techniques from single-objective robust optimization can now be applied to the constraint set. A possible idea is to derive reformulations of the semi-infinite constraints by  using duality theory, like Fenchel's strong duality theorem 
\cite[Theorem 31.1]{Rockafellar1}, which needs to be combined with  
chain rules for the conjugation of  compositions of maps, as in 
\cite[Theorem 2]{HiriartUrruty2006}. Specifically under additional assumptions, like linearity of $f$ in both the decision variable $x$ and the scenarios $u$, one can expect to obtain multiobjective optimization problems with a finite number of easier to handle constraints.

As a drawback of our method, we mention the need of some global information (find $\ell b$ and $C$) of the uncertain multiobjective  optimization problem in order to derive the new compact-valued set-valued mapping. Thus, some preprocessing with global optimization techniques is needed.          

\section*{Funding}
This research is funded by the  German Federal Ministry for Economic Affairs and Climate Action (BMWK) under grant 03EI4013B.


\bibliographystyle{siam}
\bibliography{references}
\end{document}